\documentclass[11pt]{article}

\usepackage[margin=1in]{geometry}
\usepackage{enumerate}
\usepackage{amsmath}
\usepackage{amssymb,latexsym}
\usepackage{amsthm}
\usepackage{color}
\usepackage{graphicx}
\usepackage{hyperref}
\usepackage{mathabx}

\date{}
\newcommand{\cA}{{\mathcal A}}

\newcommand{\cP}{{\mathcal P}}

\newcommand{\cD}{{\mathcal D}}
\newcommand{\cC}{{\mathcal C}}
\newcommand{\cL}{{\mathcal L}}

\newcommand{\F}{{\mathbb{F}}}
\newcommand{\cS}{{\cal S}}
\newcommand{\cT}{{\cal T}}

\newcommand{\PG}{\mathrm{PG}}
\newcommand{\AG}{\mathrm{AG}}

\newcommand{\Tr}{\mathrm{Tr}}

\newtheorem{theorem}{Theorem}[section]
\newtheorem{lemma}[theorem]{Lemma}
\newtheorem{result}[theorem]{Result}
\newtheorem{corollary}[theorem]{Corollary}
\newtheorem{definition}[theorem]{Definition}
\newtheorem{proposition}[theorem]{Proposition}
\newtheorem{example}[theorem]{Example}
\newtheorem{property}[theorem]{Property}
\newtheorem{notation}[theorem]{Notation}

\usepackage{fancyhdr}
\pagestyle{fancy}
\fancyhf{}

\setlength{\headheight}{14.5pt}
\chead{GENERALIZING KM--ARCS}
\cfoot{\thepage}

\begin{document}

\title{Generalizing Korchm\'aros--Mazzocca arcs}
\author{Bence Csajb\'ok and Zsuzsa Weiner\thanks{
		The first author is supported by the J\'anos Bolyai Research Scholarship of the Hungarian Academy of Sciences and by OTKA Grant No. PD 132463. Both authors  acknowledge the support of OTKA Grant No. K 124950.}}

\maketitle

\begin{center}
\emph{We dedicate our work to the memory of our high school mathematics teacher, Dr.\ J\'anos Urb\'an to whom we are both very grateful.}
\end{center}

\begin{abstract}
In this paper, we generalize the so called Korchm\'aros--Mazzocca arcs, that is, point sets of size $q+t$ intersecting each line in $0, 2$ or $t$ points in a finite projective plane of order $q$. For $t\neq 2$, this means that each point of the point set is incident with exactly one line meeting the point set in $t$ points.

In $\mathrm{PG}(2,p^n)$, we change $2$ in the definition above to any integer $m$ and describe all examples when $m$ or $t$ is not divisible by $p$. We also study mod $p$ variants of these objects, give examples and under some conditions we prove the existence of a nucleus. 
\end{abstract}

\bigskip

{\it MSC2020 subject classification:} 51E20, 51E21

\section{Introduction}
A $(q+t)$-set $\cal K$ of type $(0,2,t)$ is a point set of size $q+t$ in a finite projective plane of order $q$ meeting each line in $0$, $2$ or in $t$ points. Note that if $t\neq 2$ then  this means that through each point of $\cal K$ there passes a unique line meeting $\cal K$ in $t$ points. 
For $t=1$ we get the ovals, for $t=2$ the hyperovals; thus this concept generalizes well-known objects of finite geometry. 
They were studied first by Korchm\'aros and Mazzocca in 1990, see \cite{KM}, that is why nowadays they are called KM-arcs. 
For $1<t<q$, they proved that KM-arcs exist only for $q$ even and $t \mid q$. KM-arcs have been studied mostly in Desarguesian planes, where G\'acs and Weiner proved that the $t$-secants of a KM-arc are concurrent \cite{GW}. For a different proof see \cite{BCs}. For various examples see \cite{Belga1, Belga2,GW, PVDesarg}.
Let $\Pi_q$ denote a (not necessarily Desarguesian) projective plane of order $q$. Examples of Vandendriessche \cite{PV} show that the $t$-secants of a KM-arc are not necessarily concurrent in $\Pi_q$.

In this paper, we generalize the concept of KM-arcs. We give examples and prove some characterization type results.

Throughout the paper, an $i$-secant will be a line intersecting our point set in $i$ points, the $1$-secants will be called tangents. An $i_p$-secant is a line intersecting our point set in $i \pmod p$ points. Sometimes we will need to distinguish between $i_p$-secants having $0$ points in common with our point set and $i_p$-secants intersecting our point set in at least a point. The second type of lines will be called proper $i_p$-secants. Many of our examples are related to subplanes of order $\sqrt q$ of a projective plane of order $q$; these are also called Baer subplanes. 

\medskip
\noindent
{\bf Definition \ref{defKMarcs}} \emph{A \emph{generalized KM-arc $\cS$ of type $(0,m,t)$} is a proper non-empty subset of points of size $q(m-1)+t$ in $\Pi_q$ meeting each line in $0$, $m$, or in $t$ points.}

\medskip

It is easy to see that when $t\neq m$, then each point of a generalized KM-arc $\cS$ of type $(0,m,t)$ in $\Pi_q$ is incident with exactly one $t$-secant and $q$ $m$-secants.

We also allow $m=t$, which gives the well-known maximal arcs. So in Desarguesian planes for $1<m=t<q$ they only exist for $q$ even (\cite{maxarc, maxarc2}). 

If $t=1$ (and $m\neq 1$) then generalized KM-arcs are called regular semiovals and G\'acs proved the following.

\begin{result}[\cite{gacssemi}]
\label{regsemi}
In $\PG(2,q)$, generalized KM-arcs of type $(0,m,1)$ (i.e. regular semiovals) are ovals $(m=2)$ and unitals $(m=\sqrt{q}+1)$. 
\end{result}

\medskip
\noindent
{\bf Definition \ref{defmodpKMarcs}} 
\emph{A \emph{mod $p$ generalized KM-arc $\cal S$ of type $(0, m, t)_p$} is a proper non-empty subset of points in $\Pi_q$, $q=p^n$, $p$ prime, such that each point $R \in \cal S$ is incident with a $t_{p}$-secant and the other $q$ lines through $R$ are $m_{p}$-secants, where $0\leq m,t \leq p-1$ are not necessarily distinct integers.} 

\medskip

The following theorems are the main results of our paper.

\medskip
\noindent
{\bf Theorem \ref{mainm0}}	\emph{Let $\cS$ be a mod p generalized KM-arc of type $(0, m, t)_p$ in $\PG(2,q)$, $q > 17$. Assume that $t\neq m$. If there are no $0$-secants of $\cS$ or $m=0$, then the $t_{p}$-secants of $\cS$ are concurrent.}
\medskip

\noindent
{\bf Theorem \ref{main}} \emph{For a generalized KM-arc $\cal S$ of type $(0,m,t)$ in $\PG(2,q)$, $q=p^n$, $p$ prime, either $m \equiv t \equiv 0 \pmod p$ or $\cal S$ is one of the following:
\begin{enumerate}[\rm(1)]
   \item a set of $t$ collinear points $(m=1)$,
   \item the union of $m$ lines incident with a point $P$, minus $P$ $(t=q)$,
   \item an oval $(t = 1,\, m=2)$,
   \item a maximal arc with at most one of its points removed $(t = m,\, t=m-1)$,
   \item a unital $(t = 1,\, m=\sqrt q+1)$.
 \end{enumerate}}
\medskip

The proofs rely on a stability result of Sz\H onyi and Weiner regarding $k$ mod $p$ multisets; and other polynomial techniques which ensure that in case of $t \not\equiv m \pmod p$ the $t_{p}$-secants meeting a fixed $m_{p}$-secant in $\cal S$ are concurrent, see Section \ref{renitentsection}. 
We also discuss connections with the Dirac--Motzkin conjecture regarding the number of lines meeting a point set of $\mathrm{PG}(2,\mathbb{R})$ in two points and a construction relying on sharply focused arcs of $\PG(2,q)$, see Section \ref{Diracsection}.

\medskip

Finally, we point out some relations with group divisible designs.
A $k$-GDD is a triple $({\cal V}, {\cal G}, {\cal B})$, where $\cal V$ is a set of points, $\cal G$ is a partition of $\cal V$ into parts (called groups), $|{\cal G}|>1$, and $\cal B$ is a family of $k$-subsets (called blocks) of $\cal V$ such that every pair of distinct elements of $\cal V$ occurs in exactly one block or in one group but not both. For more details and for the more general definition see \cite[Part IV]{Handbook}. 
If $t\neq m$, then the $t$-secants of a generalized KM-arc $\cS$ of type $(0,m,t)$ induce a partition on the points of $\cS$ and so it gives an $m$-GDD with the special property that each group in $\cal G$ has the same size $t$. Note that these GDDs are naturally embedded into a finite projective plane. Most probably the parameters of the GDDs coming from our examples on generalized KM-arcs are not new, but the explicit construction makes them interesting.

\section{Generalized KM-arcs}

\begin{definition}
\label{defKMarcs}
A \emph{generalized KM-arc $\cS$ of type $(0,m,t)$} is a proper non-empty subset of points of size $q(m-1)+t$ in $\Pi_q$ meeting each line in $0$, $m$, or in $t$ points.
\end{definition}

\begin{proposition}
 If $t \neq m$, then each point of a generalized KM-arc $\cS$ of type $(0,m,t)$ in $\Pi_q$ is incident with exactly one $t$-secant and $q$ $m$-secants.  \qed
\end{proposition}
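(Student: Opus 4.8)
The plan is to use a direct double-counting argument centred at an arbitrary point $P$ of $\cS$. First I would observe that, since $P \in \cS$ lies on every line through it, none of the $q+1$ lines of the pencil through $P$ can be a $0$-secant; hence each such line is either an $m$-secant or a $t$-secant. Writing $a$ for the number of $t$-secants through $P$ and $b$ for the number of $m$-secants through $P$, we have $a + b = q+1$.

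Next I would count the points of $\cS$ by sorting them according to the line of the pencil on which they lie. Every point of $\cS$ other than $P$ lies on exactly one line through $P$, because two distinct lines of the pencil meet only in $P$; thus the sets $(\ell \cap \cS) \setminus \{P\}$, as $\ell$ ranges over the lines through $P$, partition $\cS \setminus \{P\}$. Each $t$-secant contributes $t-1$ such points and each $m$-secant contributes $m-1$, which gives
\[
|\cS| = 1 + a(t-1) + b(m-1).
\]

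Now I would insert the known cardinality $|\cS| = q(m-1)+t$ together with $b = q+1-a$ and simplify. After the common term $q(m-1)$ cancels, the identity collapses to $t - m = a(t-m)$. This is the single place where the hypothesis enters: since $t \neq m$ the factor $t-m$ is nonzero and may be divided out, forcing $a = 1$ and hence $b = q$. Therefore through each point of $\cS$ there passes exactly one $t$-secant and exactly $q$ $m$-secants, as claimed.

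The argument is wholly elementary, and there is no genuine obstacle beyond the bookkeeping of the incidence count. The only subtle point is that the hypothesis $t \neq m$ is indispensable for the final division; this is precisely why the statement must exclude the maximal-arc situation $m = t$, in which every line through a point of the arc is an $m$-secant and the notion of a distinguished $t$-secant disappears.
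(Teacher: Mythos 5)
Your proof is correct and complete; note that the paper states this proposition with an immediate \verb|\qed| and no written argument, regarding it as routine, and your pencil-counting computation (no $0$-secants through $P\in\cS$, then $a+b=q+1$ and $|\cS|=1+a(t-1)+b(m-1)$ collapsing to $(a-1)(t-m)=0$) is exactly the omitted argument the authors intend. Your closing remark that the hypothesis $t\neq m$ is indispensable is also the right observation, since for $t=m$ (maximal arcs) every line through a point of $\cS$ is an $m$-secant and the distinguished $t$-secant need not exist.
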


In the introduction, we saw that ovals, maximal arcs and KM-arcs are generalized KM-arcs. Now let see some further examples, which we will refer to as \emph{trivial}:

\begin{example} \label{trivialwith0}
	Trivial examples for generalized KM-arcs of type $(0,m,t)$ admitting $0$-secants:
	\begin{enumerate}[\rm(1)]
		\item 	a set of $t\,(< q + 1)$ collinear points $(m=1)$,
		\item 	union of $m\,(< q+1)$ lines through a point $P$, minus $P$ $(t=q)$,
	    \item 	ovals $(t = 1,\, m=2)$,
		\item   a maximal arc with at most one of its points removed $(t = m,\, t=m-1)$. 
	\end{enumerate}
\end{example}

\begin{example} \label{trivialwithout0}
Trivial examples for generalized KM-arcs of type $(0,m, t)$ without $0$-secants:
	\begin{enumerate}[\rm(1)]
		\item 	a set of $q+1$ collinear points $(m=1)$,
		\item 	a unital $(t = 1,\, m=\sqrt q + 1)$,
		\item  complement of a Baer subplane
		$(t = q-\sqrt q,\, m=q)$,
		\item  complement of a point $(t = q,\, m=q+1)$.
	\end{enumerate}
\end{example} 

First we characterize  generalized KM-arcs without $0$-secants. 
Such sets intersect every line in $m$ or $t$ points; they are sets of type $(m,t)$.

A minimal $r$-fold blocking set $B$ is a point set intersecting every line in at least $r$ points such that each point of $B$ is incident with at least one $r$-secant of $B$. 

\begin{result}
[{\emph {\cite[Theorem 1.1]{minimultiple}}}]
\label{AnuragEtAll}
A minimal $t$-fold blocking set $B$ in a finite projective plane $\pi$ of order $n$ has size at most
\[\frac{1}{2}n\sqrt{4tn - (3t + 1)(t-1)} + \frac{1}{2}(t-1)n + t.\]
If $n$ is a prime power, then equality occurs exactly in the following cases:
\begin{enumerate}[\rm(1)]
    \item $t = n$ and $B$ is the plane $\pi$ with one point removed,
    \item $t = 1$, $n$ a square, and $B$ is a unital in $\pi$,
    \item $t = n- \sqrt n$, $n$ a square, and $B$ is the complement of a Baer subplane in $\pi$.
\end{enumerate}
\end{result}

A $1$-fold blocking set is also called a blocking set. The result above was already proved by Bruen and Thas (\cite{BT}) for blocking sets, showing that a minimal blocking set has size at most $n\sqrt n + 1$.

Clearly, if $t<m$ then generalized KM-arcs of type $(0,m,t)$ without $0$-secants are minimal $t$-fold blocking sets.

\begin{theorem}
\label{degencomb}
A generalized KM-arc $\cS$ of type $(0, m,t)$ without $0$-secants in $\Pi_q$, $q$ is a prime power, is always trivial, i.e. one of Example \ref{trivialwithout0}.
\end{theorem}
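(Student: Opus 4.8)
The plan is to reduce everything to two elementary counts together with the extremal bound of Result~\ref{AnuragEtAll}. First dispose of the case $m=t$: then $\cS$ meets every line in exactly $m$ points, and since $\cS$ is proper and non-empty there is a point on $\cS$ and a point off $\cS$; comparing the line-counts through these two points forces $q+1=1$, a contradiction. So we may assume $t\neq m$, and then each point of $\cS$ lies on a \emph{unique} $t$-secant; hence the $t$-secants partition $\cS$ and $n_t=|\cS|/t$ with $|\cS|=q(m-1)+t$. Having no $0$-secants, $\cS$ also satisfies $n_m+n_t=q^2+q+1$ (line count) and $m\,n_m+t\,n_t=|\cS|(q+1)$ (incidence count). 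Eliminating $n_m$ and $n_t$ from these relations yields the single identity
\[(m-1)(m-t)=t(q+1-t).\]

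The classification now splits on the sign of $m-t$. If $m<t$, then in the identity above the left-hand side is $\le 0$ (as $m\ge 1$ and $m-t<0$) while the right-hand side is $\ge 0$ (as $1\le t\le q+1$); hence both vanish, forcing $m=1$ and $t=q+1$. Then $|\cS|=q+1$ and the unique $(q+1)$-secant through any point carries all of $\cS$, so $\cS$ is a line, Example~\ref{trivialwithout0}(1). If instead $t<m$, then $\cS$ is a minimal $t$-fold blocking set, so Result~\ref{AnuragEtAll} applies. Solving the identity for the root $m>t$ gives $m=\tfrac12\bigl(t+1+\sqrt{D}\bigr)$ with $D=4tq-(3t+1)(t-1)$, and substituting back shows
\[|\cS|=q(m-1)+t=\tfrac12 q\sqrt{D}+\tfrac12(t-1)q+t,\]
which is exactly the maximum size in Result~\ref{AnuragEtAll}. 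Its equality characterization then forces $\cS$ to be the plane minus a point ($t=q$), a unital ($t=1$), or the complement of a Baer subplane ($t=q-\sqrt q$): Example~\ref{trivialwithout0}(4), (2), (3) respectively.

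The routine parts are the two double counts and the algebra eliminating $n_m,n_t$; the one step that has to be got right—and that I expect to be the crux—is recognizing that the discriminant of the quadratic $(m-1)(m-t)=t(q+1-t)$ is precisely $4tq-(3t+1)(t-1)$, so that the resulting value of $|\cS|$ coincides with the Bruen--Thas-type extremal bound rather than falling strictly below it. Once this algebraic identification is in place, the entire $t<m$ case is handed over to the equality part of Result~\ref{AnuragEtAll} and no further geometry is required, while the $m<t$ case is settled by the elementary sign argument above.
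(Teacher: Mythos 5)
Your proposal is correct and takes essentially the same route as the paper: your identity $(m-1)(m-t)=t(q+1-t)$ is exactly the paper's equation \eqref{stand} in factored form, and the $t<m$ case is handled identically, by checking that $|\cS|=q(m-1)+t$ attains the extremal bound of Result \ref{AnuragEtAll} (same discriminant computation) and invoking its equality classification. The only cosmetic differences are that you derive the identity from the $t$-secant partition plus two counts rather than the three standard equations, and you dispose of the $m<t$ case by a sign argument on the identity instead of the paper's integrality argument for $1-q/(t-m)$ in \eqref{Royle2} --- both are equivalent in substance.
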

\begin{proof}
Note that $m\neq t$ since $\cS$ has to be a proper subset of $\Pi_q$. 
Let $k$ denote the size of any set of type $(m,t)$. Let $n_m$ denote the number of $m$-secants and $n_t$ denote the number of $t$-secants. Then 
\begin{equation}
n_m+n_t = q^2+q+1,
\end{equation}
\begin{equation}
mn_m+tn_t = (q+1)k,    
\end{equation}
\begin{equation}
m(m-1)n_m+t(t-1)n_t = k(k-1).
\end{equation}
From these equations one can easily deduce the following equations. For more details, see for example \cite{PentillaRoyle}. 

\begin{equation}
\label{Royle}
k^2-k(q(m+t-1)+m+t)+mt(q^2+q+1)=0.
\end{equation}
The number of $t$-secants incident with any point $Q\notin \cS$, using that $k=q(m-1)+t$, is
\begin{equation}
\label{Royle2}
    \frac{k-m(q+1)}{t-m}=1-\frac{q}{t-m}.
\end{equation}

This number must be a non-negative integer. Thus, if $t>m$, then $1-q/(t-m)=0$ and hence $t=q+1$ and $m=1$. This is only possible if $\cS$ is a line.

From now on we may assume $t<m$.
After substituting $k=t+q(m-1)$ in \eqref{Royle} and dividing by $q$, we obtain
\begin{equation}
\label{stand}
m^2-m t-m-q t+t^2=0.
\end{equation}
Then, since $t<m$,
\[m=\frac{1}{2} \left(\sqrt{4 q t-3 t^2+2 t+1}+t+1\right).\]
Then $\cS$ must be a minimal $t$-fold blocking set whose size $q(m-1)+t$ obtains the upper bound in Result \ref{AnuragEtAll} and hence the result follows.
\end{proof}

There are some more sophisticated examples, all of them with the property $m\equiv t \equiv 0 \pmod p$. 

\begin{example}[In terms of GDDs this was found by Wallis, see {\cite[Theorem 2.34]{Handbook}}. 
In $\PG(2,9)$ it is the same as {\cite[Example 4.4]{BBGSzW}} related to an extremal linear code.]
\label{Baer}
Let $\Pi_q$ be a projective plane of order $q$ and $\Pi_{\sqrt{q}}$ a Baer subplane of $\Pi_q$. 
Take any point $P$ of $\Pi_{\sqrt{q}}$ and denote by $\cL$ the union of the ${\sqrt{q}}+1$ lines of $\Pi_q$ which are incident with $P$ and meet $\Pi_{\sqrt{q}}$ in ${\sqrt{q}}+1$ points. Then the point set $\cL \setminus \Pi_{\sqrt{q}}$ is a generalized KM-arc of type $(0, \sqrt q, q-\sqrt q)$.	
\end{example}

Example \ref{Baer} exists in every finite projective plane admitting Baer subplanes. In Desarguesian planes, we can generalize this example. To see this we have to introduce some notation.
Let $f(x)$ be an $\F_q$-linear $\F_{q^n} \rightarrow \F_{q^n}$ function. The graph of $f$ is the affine point set 
\[U_f:=\{(x,f(x))\colon x\in \F_{q^n}\}\subseteq \AG(2,q^n).\]
The points of the line at infinity, $\ell_{\infty}$, are called directions. A direction $(d)$ is the common point of the lines with slope $d$. The set of directions determined by $f$ is:
\[D_f:=\left\{\left(\frac{f(x)-f(y)}{x-y}\right)\colon x,y\in \F_{q^n},\, x\neq y\right\}.\]

Since $f$ is $\F_q$-linear, for each direction $(d)$, there is a non-negative integer $e$, such that each line of $\PG(2,q^n)$ with slope $d$ meets $U_f$ in $q^e$ or $0$ points. The value $e$ will be called the \emph{exponent of $(d)$}.

\begin{example}
	\label{trace}

Put $f(x)=\Tr_{q^n/q}(x)=x+x^q+x^{q^2}+\ldots+x^{q^{n-1}}$.
Then $|D_f|=q^{n-1}+1$, the exponent of $(0)$ is $n-1$, the exponent of the points of $D_f \setminus \{(0)\}$ is $1$ and it is $0$ for the not determined directions. More precisely, $U_f\cup D_f$ is contained in 
\[\cL:=\ell_{\infty}\cup \bigcup_{y\in \F_q} \{(x,y) \colon x\in \F_{q^n}\},\]
which is the union of $q+1$ lines incident with $(0)$.

Then $\cL \setminus (D_f \cup U_f)$ is a generalized KM-arc of type $(0, q, q^n-q^{n-1})$ in $\PG(2, q^n)$. 
\end{example}

Note that when $n=2$, then Example \ref{trace} gives Example \ref{Baer} in Desarguesian planes.

The next example has only few $0$-secants, later it will turn out that in some sense this is an extreme example. 

\begin{result}[Mason {\cite[Theorem 2.5]{Mason}}]
\label{few0secants}
In $\PG(2,p^n)$, $p$ prime and $m < n$, there exist sets of type $(0,p^n-p^m,p^n-2p^m+1)$ and of size $(p^n-p^m)(p^n-1)$ with three $0$-secants.
\end{result}

\begin{example}
\label{genkmfew0}
When $p=3$ and $m=n-1$ then the point set of Result \ref{few0secants} is a generalized KM-arc of type $(0,2q/3,q/3)$ in $\PG(2,q)$, $q=p^n$, $p$ prime, with three $0$-secants and $2(q-1)$ $t$-secants. 
\end{example}

In the following extremal cases it is easy to characterize generalized KM-arcs. 

\begin{proposition}
\label{easy}
Let $\cS$ be a generalized KM-arc of type $(0,m, t)$  in $\Pi_q$. Then the following holds:
\begin{enumerate}[\rm(1)]
    \item if $t=q+1$, then $\cS$ is a line,
    \item if $t=q$, then $\cS$ is the union of $m$ concurrent lines, with their common point $P$ removed,
    \item if $m=q+1$, then $\cS$ is the complement of a point,
    \item if $m=q$ and $q$ is a prime power, then $\cS$ is the complement of a Baer subplane or $\cS$ is an affine plane of order $q$ with at most one of its points removed,
    \item if $m=1$, then $\cS$ is a subset of a line.
\end{enumerate}
\end{proposition}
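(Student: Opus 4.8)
The plan is to treat the five cases essentially independently, reducing the two cases that look like genuine sets of type $(m,t)$ (namely (3) and half of (4)) to the already proved Theorem \ref{degencomb}, and settling the remaining cases by elementary incidence counting. Throughout I will use that, since $t\neq m$ in all the relevant situations, each point of $\cS$ lies on exactly one $t$-secant and on $q$ $m$-secants; hence the number of $t$-secants is $n_t=|\cS|/t$ and the number of $m$-secants is $n_m=q|\cS|/m$, where $|\cS|=q(m-1)+t$.

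For (5), note that $m=1$ forces $|\cS|=t$, so any $t$-secant already contains all of $\cS$ and $\cS$ lies on a line (if $t=m=1$ then $|\cS|=1$, trivially). Case (1) is similar in spirit: when $t=q+1$ a $t$-secant is a full line contained in $\cS$, and two distinct full lines would meet in a point of $\cS$ incident with two $t$-secants, contradicting uniqueness; so there is a single full line and $\cS$ equals it. For (2), with $t=q$ the count gives exactly $n_t=|\cS|/q=m$ $t$-secants, each of which is a line with a single point removed. Two such secants meet in a point outside $\cS$ (otherwise that point would lie on two $t$-secants), and this intersection must be the unique point missing from each of them; hence all $m$ secants share one common missing point $P$, and $\cS$ is precisely the union of these $m$ lines through $P$ with $P$ deleted.

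Cases (3) and the no-$0$-secant part of (4) I would reduce to Theorem \ref{degencomb}. If $m=q+1$, then $\Pi_q\setminus\cS$ has size $q+1-t\le q$, so it contains no full line, $\cS$ has no $0$-secant, and $\cS$ is a set of type $(q+1,t)$ without $0$-secants; Theorem \ref{degencomb} then forces $\cS$ to be the unique item of Example \ref{trivialwithout0} with $m=q+1$, the complement of a point. For $m=q$ I would split on whether $\cS$ has a $0$-secant. If it has none, then $\cS$ is a set of type $(q,t)$ without $0$-secants (here $t\neq q$ automatically, since a constant-intersection set is either empty or the whole plane), and Theorem \ref{degencomb} identifies it as the only $m=q$ item of Example \ref{trivialwithout0}, the complement of a Baer subplane; this is where $q$ being a prime power is essential.

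The remaining subcase, $m=q$ with a $0$-secant $\ell$, is the delicate one. Here $\cS\subseteq A:=\Pi_q\setminus\ell$, and I would analyse $\overline{\cS}:=A\setminus\cS$, of size $q-t$. The key observation is that any $t$-secant is an affine line meeting $\cS$ in $t$ of its $q$ affine points, hence meeting $\overline{\cS}$ in $q-t=|\overline{\cS}|$ points; that is, every $t$-secant contains all of $\overline{\cS}$. If $|\overline{\cS}|\ge 2$ this would force all $t$-secants to coincide with the single line through two fixed points of $\overline{\cS}$, so all of $\cS$ would lie on one line, contradicting $|\cS|=q^2-q+t>t$. Therefore $|\overline{\cS}|\le 1$ and $\cS$ is the affine plane with at most one point removed. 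I expect this affine subcase to be the main obstacle, both because it must be cleanly separated from the no-$0$-secant case and because one must check that the counting rules out $|\overline{\cS}|\ge 2$ uniformly in $q$; the other four cases are short once the incidence counts and Theorem \ref{degencomb} are in hand.
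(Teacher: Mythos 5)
Your proposal is correct in substance and in fact spells out more than the paper does: the paper's proof treats only case (4), declaring (1), (2), (3) and (5) straightforward. For the delicate subcase of (4) (a $0$-secant $\ell$ exists) your argument is genuinely different from the paper's. The paper supposes there are two points $P,Q\notin \ell\cup\cS$, picks $R\in\cS\setminus PQ$, and observes that each of $RP$ and $RQ$ misses at least two points of the plane (its point on $\ell$, and $P$ resp.\ $Q$), hence neither is a $q$-secant, so both are $t$-secants through $R$, contradicting uniqueness. Your route --- every $t$-secant has exactly $q-t$ affine points outside $\cS$, which is all of $\overline{\cS}$, so $|\overline{\cS}|\geq 2$ would collapse all $t$-secants into a single line carrying all of $\cS$ --- is equally elementary, and has the small advantage of surviving the degenerate possibility $t=m=q$ (where $\overline{\cS}=\emptyset$ at once, without ever using the uniqueness of the $t$-secant through a point), which the paper's argument tacitly excludes.

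Two caveats. First, in case (3) you invoke Theorem \ref{degencomb}, whose classification part rests on the equality cases of Result \ref{AnuragEtAll} and therefore on $q$ being a prime power; but item (3), unlike item (4), is stated for an arbitrary plane $\Pi_q$, so your proof of (3) establishes a formally weaker statement. A direct argument avoids the machinery: if $m=q+1$, every $m$-secant is a full line contained in $\cS$, so through any $R\in\cS$ there pass $q$ full lines; these meet every line of the plane, whence $\cS$ has no $0$-secant, and counting through any $Q\notin\cS$ gives $(q+1)t=|\cS|=q^2+t$, i.e.\ $t=q$ and the complement is a single point. Second, your blanket assertion that $t\neq m$ ``in all the relevant situations'' is justified in (1), where $t=m=q+1$ would force $|\cS|=q^2+q+1$ against properness, but not in (2): $t=m=q$ is consistent with the definition (it gives $|\cS|=q^2$), and there your count $n_t=|\cS|/t=m$, which relies on the $t$-secants partitioning $\cS$, does not literally apply. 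The conclusion of (2) still holds in that case --- the complement is a blocking set of size $q+1$, hence a line, and an affine plane is indeed the union of $q$ concurrent lines minus their common point --- but this degenerate case needs a sentence rather than the partition count.
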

\begin{proof}
We only prove $(4)$, the rest of them are straightforward (recall that by definition $\cS$ is a proper subset of $\Pi_q$).

If $\cS$ is a blocking set, then by Theorem \ref{degencomb} $\cS$ is the complement of a Baer subplane. Otherwise, denote by $\ell$ a $0$-secant of $\cS$ and suppose for the contrary that there exist two points $P, Q\notin \ell \cup \cS$. 
Since $|\cS|\geq q$, there is a point $R\in \cS \setminus PQ$. The lines $RP$ and $RQ$ are not $q$-secants of $\cS$ and hence both of them are $t$-secants incident with $R$, a contradiction. 
\end{proof}

Next we prove some combinatorial properties of a generalized KM-arcs.

\begin{lemma}
\label{combinatorial}
Let $\cS$ be a generalized KM-arc of type $(0,m, t)$  in $\Pi_q$. Then the following holds:
\begin{enumerate}[\rm(1)]
    \item $m\mid q(q-t)$,
    \item $\gcd(m,t) \mid q$,
    \item for any point $P\notin \cS$ if $t(P)$ denotes the number of $t$-secants incident with $P$ then $t(P)t \equiv t-q \pmod m$,
    \item $t \mid q(m-1)$,
    \item if $q(m-1)<(q+1-t)t$, then $m \mid q$.
    \item if $m,t\neq q$, $q=p^n$, $p$ prime, then the number of $0$-secants of $\cS$ is divisible by $p$,
    \item if $m \notdivides q-t$, then the $t$-secants of $\cS$ form a minimal blocking set of the dual plane.
\end{enumerate}
\end{lemma}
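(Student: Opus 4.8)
The plan is to derive everything from two layers of counting: global counts of the three line-types, and a single local count at a point off $\cS$. I may assume $t\neq m$ throughout; the excluded case $t=m$ is a maximal arc, where $m\mid q$ is classical and all seven assertions are then checked directly (in particular (7) becomes vacuous, since $m\mid q$ forces $m\mid q-t$). So suppose $t\neq m$, so that each point of $\cS$ lies on exactly one $t$-secant and on exactly $q$ $m$-secants. Counting the incident pairs (point of $\cS$, $t$-secant) and (point of $\cS$, $m$-secant) then gives the exact line-counts $n_t=|\cS|/t$ and $n_m=q|\cS|/m$, where $|\cS|=q(m-1)+t$. Parts (4) and (1) are just the integrality of these numbers: $t\mid|\cS|$ yields $t\mid q(m-1)$, and the integrality of $n_m$, i.e. $m\mid q|\cS|=q^2(m-1)+qt\equiv -q(q-t)\pmod m$, yields $m\mid q(q-t)$.

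For (3) I fix a point $P\notin\cS$ (which exists as $\cS$ is proper) and let $z(P),s(P),t(P)$ count the $0$-, $m$-, and $t$-secants through $P$. Then $m\,s(P)+t\,t(P)=|\cS|$, and reducing modulo $m$ gives $t\,t(P)\equiv t-q\pmod m$. Part (2) drops out of (3): reducing the same congruence modulo $d=\gcd(m,t)$ annihilates every $t$-term and leaves $0\equiv -q\pmod d$, i.e. $d\mid q$. Eliminating $s(P)$ between $z(P)+s(P)+t(P)=q+1$ and $m\,s(P)+t\,t(P)=|\cS|$ produces the one identity that powers the two remaining parts,
\[
m\,z(P)=q+(t-m)\bigl(t(P)-1\bigr),\qquad P\notin\cS.
\]

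Part (7) reads this with $t(P)=0$: then $m\,z(P)=q-t+m$, so a point off $\cS$ missed by all $t$-secants can exist only when $m\mid q-t$. Hence if $m\nmid q-t$, every point off $\cS$ — and trivially every point of $\cS$ — lies on a $t$-secant, which dually says the $t$-secants block every point of the dual plane; minimality follows because each of the $t$ points of $\cS$ on a given $t$-secant lies on no other $t$-secant, providing the required dual tangent. For part (5) I first observe that the hypothesis $q(m-1)<(q+1-t)t$ is equivalent, using $t\mid|\cS|$, to $n_t\le q+1-t$, and that this forces $t>m$ (if $t<m$ then $|\cS|\ge t(q+1)$, so $n_t\ge q+1$, violating it). Fixing one $t$-secant $\ell$, its $q+1-t$ points off $\cS$ absorb the other $n_t-1\le q-t$ $t$-secants, each meeting $\ell$ in a single point off $\cS$ (two $t$-secants cannot meet inside $\cS$). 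Since $n_t-1<q+1-t$, pigeonhole yields a point $X\in\ell\setminus\cS$ with $t(X)=1$; substituting into the displayed identity gives $m\,z(X)=q$, hence $m\mid q$.

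Finally (6) is a $p$-adic valuation count on $n_0=q^2+q+1-n_m-n_t$. From $t\neq q$ we get $v_p(t)\le n-1$, so the integer $n_t=q(m-1)/t+1$ has $v_p\bigl(q(m-1)/t\bigr)\ge 1$ and $n_t\equiv 1\pmod p$; from $m\neq q$ we get $v_p(m)\le n-1$, and with $v_p(|\cS|)=v_p(t)$ (as $|\cS|\equiv t\pmod{p^n}$) this gives $v_p(n_m)=n+v_p(t)-v_p(m)\ge 1$, so $n_m\equiv 0\pmod p$. As $q\equiv q^2\equiv 0\pmod p$, we conclude $n_0\equiv 1-n_m-n_t\equiv 0\pmod p$. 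The genuinely non-routine step — the main obstacle — is isolating the identity $m\,z(P)=q+(t-m)(t(P)-1)$ and recognising that its two extreme instances $t(P)=0$ and $t(P)=1$ deliver (7) and (5) respectively; once that is in hand the remaining work is bookkeeping, and only the boundary values ($t=q+1$, a line; $t=m$, a maximal arc) need to be cleared away first so that $1\le t\le q$ and $t\neq m$ hold throughout.
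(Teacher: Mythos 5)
Your proof is correct and follows essentially the same route as the paper's: the same incidence counts giving $n_t=|\cS|/t$ and $n_m=q|\cS|/m$ for (1) and (4), the same congruence at a point $P\notin\cS$ for (2) and (3), the same pigeonhole on a fixed $t$-secant producing a point with a unique $t$-secant for (5), the same dual minimal blocking set argument for (7), and the same count of $0$-secants for (6). Your repackagings --- isolating the identity $m\,z(P)=q+(t-m)\bigl(t(P)-1\bigr)$, treating the $t=m$ (maximal arc) case explicitly, and spelling out the $p$-adic valuations in (6) --- merely make explicit details the paper leaves implicit.
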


\begin{proof}
 Counting pairs $(P,\ell)$, $P\in \cS \cap \ell$ with $\ell$ an $m$-secant of $\cS$ gives 
 \[mN = q|\cS|=q^2m+qt-q^2,\] 
 where $N$ is the number of $m$-secants, and hence $(1)$ follows.
 
The lines incident with $P\notin \cS$ meet $\cS$ in a multiple of $\gcd(m,t)$ points and hence $\gcd(m,t)$ divides $|\cS|=qm+t-q$; proving $(2)$.

To prove $(3)$, note that the lines incident with $P\notin \cS$ meet $\cS$ in $0$, $t$, or in $m$ points. Let $m(P)$ denote the number of $m$-secants incident with $P$. Then $t(P)t + m(P) m = |\cS|=qm+t-q$ and hence $t(P)t \equiv t-q \pmod m$.

To see $(4)$, observe that the $t$-secants form a partition of the points in $\cS$ and hence $t\divides |\cS|$.

Consider a $t$-secant $\ell$ and suppose that each point of $\ell \setminus \cS$ is incident with a further $t$-secant. Then $q(m-1)=|\cS \setminus \ell|\geq (q+1-t)t$ since the $t$-secants of $\cS$ form a partition of $\cS$. If $q(m-1)<(q+1-t)t$ then it follows that there exists at least one point $P \notin S$ on each $t$-secant, such that the number of $t$-secants incident with $P$ is $1$. Then $(5)$ follows from $(3)$.

To prove $(6)$, note that the number of $0$-secants of $\cS$ is the total number of lines of $\Pi_q$ minus the number of $t$-secants, and the number of $m$-secants of $\cS$, that is,
\[q^2+q+1- \frac{q(m-1)+t}{t} -\frac{(q(m-1)+t)q}{m}.\]
If $m,t\neq q$ then this number is divisible by $p$.

When $(7)$ holds, then by $(2)$ $m\neq t$. 
Also, $m\notdivides q-t$ yields $m \notdivides |\cS|$ and hence points not in $\cS$ are incident with at least one $t$-secant. The minimality follows from the fact that points of $\cS$ are incident with a unique $t$-secant. 
\end{proof}

Let $\cS$ be a generalized KM-arc of type $(0,m,t)$ in $\Pi_q$, $q=p^n$, $p$ prime. When $\cS$ is not a blocking set and $m, t \neq q$, then by Lemma \ref{combinatorial} $(6)$ the number of $0$-secants of $\cS$ is at least $p$ and hence Example \ref{genkmfew0} is extremal in this sense. Also, if the $t$-secants of $\cS$ do not form a blocking set of the dual plane, then $m \divides q-t$. 
Example \ref{genkmfew0} is extremal also in this sense, since there $m=q-t$. We are grateful to Tam\'as H\'eger for finding Example \ref{genkmfew0} in $\PG(2,9)$ which led us to find the paper of Mason.

\begin{theorem}
For a generalized KM-arc $\cS$ of type $(0,m,t)$  in $\Pi_q$, if $m \notdivides q-t$, then $\cS$ is either a maximal arc with one point removed or there are more than $q+1$ $t$-secants and hence they cannot be concurrent. 
\end{theorem}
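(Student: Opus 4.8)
The plan is to dualize and exploit the blocking-set dichotomy already prepared by Lemma~\ref{combinatorial}. First I would note that the hypothesis forces $m\neq t$: if $m=t$, then by Lemma~\ref{combinatorial}~$(2)$ we have $\gcd(m,t)=m\divides q$, hence $m\divides q-m=q-t$, contrary to assumption. Consequently each point of $\cS$ lies on a unique $t$-secant, the $t$-secants partition $\cS$, and their number equals
\[
\frac{|\cS|}{t}=\frac{q(m-1)+t}{t}=\frac{q(m-1)}{t}+1,
\]
an integer by Lemma~\ref{combinatorial}~$(4)$.

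Next I would apply Lemma~\ref{combinatorial}~$(7)$: since $m\notdivides q-t$, the $t$-secants of $\cS$ form a minimal blocking set $\cB$ of the dual plane. As $\cS$ has $m$-secants, $\cB$ does not consist of all lines of $\Pi_q$, so it is a proper blocking set and the classical lower bound applies: any blocking set of a projective plane of order $q$ has at least $q+1$ points. Thus there are at least $q+1$ $t$-secants. If there are strictly more than $q+1$, then, as every point of $\Pi_q$ is incident with exactly $q+1$ lines, they cannot be concurrent; this is the second alternative of the statement.

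It remains to treat the extremal case $|\cB|=q+1$, where I would invoke the equality part of the classical bound: a blocking set of size $q+1$ in a projective plane of order $q$ is a line. Dually this says $\cB$ is a pencil, i.e. the $t$-secants of $\cS$ are exactly the $q+1$ lines through a single point $P$. Comparing $|\cS|=(q+1)t$ with $|\cS|=q(m-1)+t$ yields $m=t+1$, and since $P$ lies on more than one $t$-secant we have $P\notin\cS$. I would then finish by checking that $\cS\cup\{P\}$ is a maximal arc: every line through $P$ is a $t$-secant and so meets $\cS\cup\{P\}$ in $t+1=m$ points, while every line avoiding $P$ is a $0$- or $m$-secant of $\cS$ and keeps that intersection number; hence $\cS\cup\{P\}$ is of type $(0,m)$ and $\cS$ is this maximal arc with $P$ removed.

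The conceptual heart of the argument is the passage to the dual plane together with the equality characterization of minimum blocking sets; everything else is bookkeeping on top of Lemma~\ref{combinatorial}. The two points needing a little care are verifying that $\cB$ is a \emph{proper} blocking set, so that both the bound $q+1$ and its line-characterization are legitimately available, and confirming that the extremal count $(q+1)t=q(m-1)+t$ really does force $m=t+1$, which is exactly what makes the maximal-arc reconstruction go through.
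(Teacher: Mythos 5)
Your proposal is correct and takes essentially the same route as the paper: invoke Lemma~\ref{combinatorial}~(7) to see the $t$-secants form a minimal blocking set of the dual plane, apply the classical bound $\geq q+1$ with its equality characterization (a blocking set of size $q+1$ is a line, i.e.\ dually a pencil), and in the extremal case compare $(q+1)t$ with $q(m-1)+t$ to get $m=t+1$ and reconstruct the maximal arc by adjoining the common point. Your write-up only adds details the paper leaves implicit, namely that $m\neq t$ follows from Lemma~\ref{combinatorial}~(2) and the explicit verification that $\cS\cup\{P\}$ meets every line in $0$ or $m$ points.
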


\begin{proof}
By Lemma \ref{combinatorial} the $t$-secants of $\cS$ form a minimal blocking set and hence their number is at least $q+1$ with equality if and only if they are concurrent. In this case $|\cS|=(q+1)t=t+q(m-1)$, thus $m-1=t$ and hence by adding the common point of $t$-secants to $\cS$ we obtain a maximal arc.
\end{proof}

\section{\texorpdfstring{Mod $p$ generalized KM-arcs of type $(0, m, t)_p$}{Mod p generalized KM-arcs of type (0,m,t)p}}

In this section we generalize further the concept of KM-arcs.
\begin{notation}
Recall that a line is a \emph{$t_{p}$-secant} if it meets $\cal S$ in $t \pmod p$ points. Recall also that a $t_{p}$-secant is \emph{proper} if it meets $\cal S$ in at least $1$ point. We defined \emph{$m_{p}$-secants} and \emph{proper} \emph{$m_{p}$-secants} similarly.
\end{notation}

\begin{definition}
\label{defmodpKMarcs}
A \emph{mod $p$ generalized KM-arc $\cal S$ of type $(0, m, t)_p$} is a proper non-empty subset of points in $\Pi_q$, $q=p^n$, $p$ prime, such that each point $R \in \cal S$ is incident with a $t_{p}$-secant and the other $q$ lines through $R$ are $m_{p}$-secants, where 
the integers $m$ and $t$ are not necessarily distinct and 
$0\leq m,t \leq p-1$.
\end{definition}

Generalized KM-arcs of type $(0, m, t)$ are of course mod $p$ generalized KM-arcs of type $(0, m', t')_p$ as well, where $m'$ and $t'$ are integers satisfying 
$m \equiv m' \pmod p$, $t \equiv t' \pmod p$ and $0\leq m',t' \leq p-1$. Now let us see some further examples.

\begin{definition}
\label{holvagy}
For $0 \leq c \leq p-1$, a $c$ mod $p$ intersecting  point set/multiset is a point set/multiset with the property that each line which intersects it in at least $1$ point, intersects it in $c$ mod $p$ points. (Intersection number calculated with multiplicity.) 
Note that $c$ mod $p$ intersecting point sets and mod $p$ generalized KM-arcs of type $(0,c,c)_p$ are the same objects.
\end{definition}

One can easily construct $c$ mod $p$ intersecting  point sets (or multisets).
Linear sets are $1$ mod $p$ intersecting  point sets (see \cite{OlgaSurvey}), the union of $c'$ linear sets is a $c$ mod $p$ intersecting point set or multiset where 
$c \equiv c' \pmod p$ with $0\leq c \leq p-1$. 

Let $L_1$ and $L_2$ be $0$ mod $p$ intersecting  point sets. If $L_2\subseteq L_1$, then $L_1\setminus L_2$ is also a $0$ mod $p$ intersecting  point set. 
Similarly, we get $c$ mod $p$ intersecting point sets with $c \equiv c_1-c_2 \pmod p$, $0\leq c \leq p-1$, when $L_1$ is $c_1$,  $L_2$ is $c_2$ mod $p$ intersecting point set and lines meeting $L_1$ meet $L_2$ as well.

Here are some examples for mod $p$ generalized KM-arcs of type $(0, m, t)_p$ with $t \neq m$.

\begin{example}
\label{modpkmarc}
 A $c$ mod $p$ intersecting point set with one of its points removed is a mod $p$ generalized KM-arc of type $(0,c,d)_p$ with $d\equiv c-1 \pmod p$. 
 Note that the proper $d_p$-secants of this point set are concurrent.
\end{example}

Let ${\cal C}_1$ be a $c_1$ mod $p$ intersecting point set and ${\cal C}_2$ be a $c_2$ mod $p$ intersecting point set with exactly one common point. 
Assume that every line meets either both or none of the sets $\cC_1$ and $\cC_2$. Then the sum of ${\cal C}_1$ and  ${\cal C}_2$ is a $c$ mod $p$ intersecting multiset with $c \equiv c_1 + c_2 \pmod p$ and with  exactly one point with multiplicity different from $1$.

\begin{example}
\label{modpkmarc2}
 Let $\cal C$ be a $c$ mod $p$ intersecting multiset, such that only one point $Q \in {\cal C}$ has multiplicity $r$ and the rest of the points in $\cal C$ have multiplicity $1$, $p>r>0$. Then by deleting $Q$,
 we get a mod $p$ generalized KM-arc of type $(0, c, d)_p$ with $d\equiv c-r \pmod p$. Note that the proper $d_p$-secants of this point set are concurrent.
\end{example}

The sum of a unital or a Baer subplane (or even any small minimal blocking set) and one of its tangents are examples for point sets $\cC$ in Example \ref{modpkmarc2}. There exist more sophisticated examples as well, in \cite{AK} the authors construct a multiset meeting each line in $\sqrt q-1$ or $2\sqrt q -1$ points in $\PG(2,q)$, $q$ square. This multiset has a unique point with multiplicity greater than $1$, its multiplicity is $q-1$. By removing this point we obtain a mod $p$ generalized KM-arc of type $(0,p-1,0)_p$. Note that the proper $0_p$-secants of this point set are concurrent.

\begin{lemma}
\label{1modpthroughP}
    Let $\cal S$ be a mod $p$ generalized KM-arc of type $(0, m, t)_p$ where $t \neq m$.  Take $Q \notin \cS$. If there is no $0$-secant incident with $Q$ or $m=0$, then the number of $t_{p}$-secants incident with $Q$ is $1$ mod $p$. 
\end{lemma}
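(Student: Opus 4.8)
The plan is to argue entirely by double counting modulo $p$. The key preliminary invariant is that $|\cS| \equiv t \pmod p$. To see this I would fix a point $R \in \cS$ and sum the intersection numbers $|\ell \cap \cS|$ over the pencil of $q+1$ lines $\ell$ through $R$. Since every point of $\cS \setminus \{R\}$ lies on a unique line of this pencil while $R$ lies on all $q+1$ of them, this sum equals $(|\cS|-1) + (q+1) = |\cS| + q$. On the other hand, by the defining property of a mod $p$ generalized KM-arc, exactly one line of the pencil is a $t_{p}$-secant and the remaining $q$ are $m_{p}$-secants, so the same sum is $\equiv t + qm \pmod p$. As $q \equiv 0 \pmod p$, this yields $|\cS| \equiv t \pmod p$.

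Next I would transfer the count to the pencil through $Q$. Write $a$, $b$, $c$ for the numbers of $0$-secants, proper $t_{p}$-secants and proper $m_{p}$-secants through $Q$, respectively. The crucial observation is that every line through $Q$ meeting $\cS$ is a proper $t_{p}$- or $m_{p}$-secant: indeed, if such a line contains a point $S \in \cS$, then it is one of the $q+1$ lines through $S$, each of which is a $t_{p}$- or $m_{p}$-secant by definition. Hence these three types exhaust the pencil, giving $a + b + c = q+1 \equiv 1 \pmod p$. Summing $|\ell \cap \cS|$ over the pencil through $Q$ counts each point of $\cS$ exactly once (as $Q \notin \cS$), so it equals $|\cS|$; reducing mod $p$ gives $0\cdot a + tb + mc \equiv |\cS| \equiv t \pmod p$, that is,
\[
tb + mc \equiv t \pmod p.
\]

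It remains to solve for $b$ under the two hypotheses. If $m = 0$, the displayed congruence reads $tb \equiv t \pmod p$, and since $t \neq m = 0$ with $0 \le t \le p-1$ forces $t \not\equiv 0 \pmod p$, we get $b \equiv 1 \pmod p$. If instead $Q$ lies on no $0$-secant, then $a = 0$, so $c \equiv 1 - b \pmod p$; substituting gives $(t-m)b \equiv t - m \pmod p$, and because $t \neq m$ with $0 \le m, t \le p-1$ guarantees $t - m \not\equiv 0 \pmod p$, again $b \equiv 1 \pmod p$. The only place to be careful — which I regard as the main bookkeeping subtlety rather than a genuine obstacle — is to confirm that $b$ already equals the total number of $t_{p}$-secants through $Q$: in the first case no $0$-secant is a $t_{p}$-secant because $t \not\equiv 0 \pmod p$, while in the second case there are no $0$-secants at all, so in both situations there are no improper $t_{p}$-secants and the number of $t_{p}$-secants through $Q$ is exactly $b \equiv 1 \pmod p$.
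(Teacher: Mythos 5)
Your proof is correct and is essentially the paper's own argument: both count the points of $\cS$ on the pencil of $q+1$ lines through $Q$ modulo $p$, use $|\cS|\equiv t \pmod p$, and reduce to $(t_p(Q)-1)(t-m)\equiv 0 \pmod p$ with $t-m\not\equiv 0 \pmod p$ forced by $t\neq m$ and $0\leq m,t\leq p-1$. Your explicit derivation of $|\cS|\equiv t \pmod p$ and your closing case check that every $t_p$-secant through $Q$ is proper are just details the paper compresses into the single remark that the hypotheses imply the $t_p$-secants incident with $Q$ are proper.
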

\begin{proof}
    The conditions imply that $t_{p}$-secants incident with $Q$ are proper. 
    If $t_p(Q)$ denotes the number of $t_{p}$-secants incident with $Q$, then we get
    \[t_p(Q) t + (q+1-t_p(Q))m\equiv t \pmod p,\]
    \[(t_p(Q)-1)(t-m)\equiv 0 \pmod p,\]
    and hence $t_p(Q) \equiv 1 \pmod p$.
\end{proof}

\begin{proposition}
\label{numtp}
    Let $\cal S$ be a mod $p$ generalized KM-arc of type $(0, m, t)_p$ where $t \neq m$. Then the number of proper $t_{p}$-secants is at most $q\sqrt{q}+1$. 
\end{proposition}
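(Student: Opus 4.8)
My plan is to reduce the count of proper $t_{p}$-secants to a point set carrying a tangent at each of its points, and then to invoke the bound $q\sqrt q + 1$ valid for such sets (the minimality-type bound in the spirit of Bruen--Thas, i.e. Result \ref{AnuragEtAll} with $t=1$, and of the semioval bound). Concretely, for every proper $t_{p}$-secant $\ell$ I would choose one point $R_\ell \in \cS \cap \ell$; this is possible precisely because $\ell$ is proper. Let $\cP := \{R_\ell : \ell \text{ a proper } t_{p}\text{-secant}\}$.

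The key step is to show that $\cP$ has exactly $N$ points, where $N$ denotes the number of proper $t_{p}$-secants, and that each of its points lies on a tangent of $\cP$. Here I would use the hypothesis $t \neq m$, which guarantees that $t_{p}$-secants and $m_{p}$-secants are genuinely different types, so that each point of $\cS$ is incident with a \emph{unique} $t_{p}$-secant. Consequently, if $R_\ell = R_{\ell'}$ then this common point of $\cS$ would lie on the two $t_{p}$-secants $\ell,\ell'$, forcing $\ell=\ell'$; hence the $R_\ell$ are pairwise distinct and $|\cP|=N$. The same uniqueness shows that no second representative $R_{\ell'}$ can lie on $\ell$: such a point would be a point of $\cS$ on $\ell$ whose unique $t_{p}$-secant is $\ell'\neq\ell$, a contradiction. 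Therefore $\ell \cap \cP = \{R_\ell\}$, i.e. $\ell$ is a tangent to $\cP$ at $R_\ell$, and every point of $\cP$ carries a tangent.

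Applying the bound $q\sqrt q + 1$ for point sets with a tangent at each point then yields $N=|\cP| \le q\sqrt q + 1$, as claimed; equivalently one may dualize and read $\cP$ as a set of lines each with a private point. The main obstacle I anticipate is that this bound must be used in its full ``tangent at each point'' form rather than in the more familiar blocking-set form: the proper $t_{p}$-secants need not cover all of $\Pi_q$ (in concurrent examples such as Example \ref{modpkmarc} they leave most points uncovered), so $\cP$ is in general \emph{not} a blocking set and Result \ref{AnuragEtAll} cannot be quoted verbatim. Securing the bound in the required generality---through the theory of semiovals or minimal blocking sets, valid for any order $q$---is the delicate point; once it is in place the argument above is immediate.
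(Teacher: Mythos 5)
Your reduction to the set $\cP$ is correct: since $t\neq m$, the two residues are distinct, so each point of $\cS$ lies on exactly one $t_{p}$-secant, and your two uniqueness arguments (distinctness of the representatives $R_\ell$, and $\ell\cap\cP=\{R_\ell\}$) are both valid. The route is genuinely different from the paper's, and in fact dual to it. The paper stays in the dual plane: by Lemma \ref{1modpthroughP}, every point off $\cS$ lies on a $0$-secant or on a $t_{p}$-secant, so the $0$-secants together with the $t_{p}$-secants form a blocking set of the dual plane; each \emph{proper} $t_{p}$-secant $\ell$ is an essential point of this blocking set (any $R\in\cS\cap\ell$ lies on no $0$-secant and on no other $t_{p}$-secant), and the bound then comes from Bruen--Thas \cite{BT} on the number of essential points of a blocking set. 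You work in the primal plane and discard the $0$-secants entirely --- so you never need Lemma \ref{1modpthroughP} --- at the price of needing the bound $q\sqrt q+1$ for \emph{tangency sets} (at least one tangent at each point, no blocking hypothesis), which, as you correctly flag, is neither Result \ref{AnuragEtAll} with $t=1$ nor the exactly-one-tangent semioval bound verbatim.

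Your flagged worry is the only real issue, and it dissolves: the tangency-set bound is true, because the Bruen--Thas count never actually uses the blocking property. Run it on your own configuration. The $N$ lines $\ell$ are tangents to $\cP$, each carrying exactly $q$ points off $\cP$; two of them can meet only off $\cP$ (a common point in $\cP$ would be a common representative, forcing $\ell=\ell'$). So if $x_Q$ denotes the number of these tangents through a point $Q\notin\cP$ and $V=q^2+q+1-N$, then $\sum_Q x_Q=Nq$ and $\sum_Q \binom{x_Q}{2}=\binom{N}{2}$, whence $\sum_Q x_Q^2=N(N-1)+Nq$, and Cauchy--Schwarz gives $N^2q^2\leq V\bigl(N(N-1)+Nq\bigr)$, i.e. $Nq^2\leq (q^2+q+1-N)(N+q-1)$, which simplifies to $(N-1)^2\leq q^3$, that is, $N\leq q\sqrt q+1$. (This is exactly the computation behind the essential-points statement the paper cites from \cite{BT}; the blocking hypothesis there is idle for this inequality.) With this three-line count inserted, your proof is complete, and arguably slightly more economical than the paper's, since it bypasses the mod $p$ argument of Lemma \ref{1modpthroughP}; alternatively, you could reach the literally cited form of \cite{BT} by following the paper and adjoining the $0$-secants in the dual plane.
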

\begin{proof}
By Lemma  \ref{1modpthroughP}, the $0$-secants and the $t_{p}$-secants form a blocking set on the dual plane. The proper $t_{p}$-secants in this blocking set are essential and hence their number is at most $q\sqrt{q}+1$ (see \cite{BT}).
\end{proof}

\subsection{\texorpdfstring{The $c$ mod $p$ intersecting case}{The c mod p intersecting case}}
\label{cmodpsection}

\begin{proposition}[{\cite[Lemma 3]{semi} for $c=1$ and \cite[Exercise 13.4]{Sziklaipoly} for $c$ in general}]
\label{exercise}
A $c$ mod $p$ intersecting point set $\cS$ either meets every line in $c$ mod $p$ points or $c=1$ and $|\cS|\leq q-p+1$.
\end{proposition}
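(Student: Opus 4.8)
The plan is to dispose of the trivial direction first and then concentrate on the genuine case. If $\cS$ has no $0$-secant, then every line is a proper secant and hence meets $\cS$ in $c\bmod p$ points, so the first alternative holds with nothing to prove; likewise if $c=0$ the first alternative holds automatically, since a $0$-secant meets $\cS$ in $0\equiv c\pmod p$ points. So from now on I would assume that a $0$-secant exists and that $1\le c\le p-1$, and aim to prove that $c=1$ and $|\cS|\le q-p+1$. Fixing a $0$-secant as the line at infinity, I regard $\cS$ as an affine point set in $\AG(2,q)$ and write $N=|\cS|$.

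First I would extract the mod $p$ bookkeeping. Counting, for a fixed $P\in\cS$, the sizes of the $q+1$ lines through $P$ (each $\geq 1$, hence $\geq c$, and $\equiv c\pmod p$) against the incidence sum $\sum_{\ell\ni P}|\cS\cap\ell|=N+q$, one gets $N\equiv c\pmod p$ and $N\geq (c-1)q+c$. Next, for each of the $q+1$ directions the $q$ parallel affine lines partition $\cS$; if $s_d$ denotes the number of them that are proper secants, then $c\,s_d\equiv N\equiv c\pmod p$, so (as $c$ is invertible mod $p$) $s_d\equiv 1\pmod p$ and therefore $s_d\leq q-p+1$. In particular every direction carries at least $p-1$ affine $0$-secants. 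This already reveals the extremal object: a line with $p-1$ of its points deleted is a $c=1$ example attaining the bound, since then every direction other than that of the line is non-determined and carries exactly $N=q-p+1$ tangents.

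The heart of the argument is the polynomial method applied to the Rédei polynomial $R(X,Y)=\prod_{(a,b)\in\cS}(X+aY-b)$, of degree $N$ in $X$. For each direction $d\in\F_q$ the specialization factors as $R(X,d)=f_d(X)^{c}\,g_d(X)^{p}$, where $f_d=\prod_{e\in P_d}(X-e)$ is squarefree, divides $X^q-X$, and has degree $s_d$; here $P_d$ is the set of intercepts of the proper secants of slope $d$, and the $p$-th power absorbs the surplus of each multiplicity over $c$. I would then invoke the lacunary-polynomial machinery of Rédei and Sz\H onyi: reducing $R$ modulo $X^q-X$ and analysing the non-$p$-th-power part of $R(X,d)$ across the directions bounds the number of determined directions and forces the multiplicity parameter $c$ down to $1$, after which the degree estimates pin $N$ to at most $q-p+1$. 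Concretely, the factors $f_d$ of maximal degree correspond to the non-determined directions, where $R(X,d)$ is itself squarefree of degree $N$ dividing $X^q-X$; exhibiting one such direction gives $N=s_d\leq q-p+1$ at once, and the lacunary theory is exactly what guarantees such a direction once $c\geq 2$ has been excluded (for $c\geq 2$ there are no tangents, hence \emph{every} direction is determined, which is where the genuine content lies).

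The main obstacle, and the step I would spend the most care on, is this last polynomial input: ruling out $c\geq 2$ and converting the factorization $R(X,d)=f_d^{c}g_d^{p}$ into the sharp degree bound. A purely combinatorial count is too lossy here. For instance, a subplane of order $p$ in $\PG(2,p^n)$ with $n\geq 3$ is a non-collinear $1$ mod $p$ set with $0$-secants, so one cannot hope to prove ``$\cS$ lies on a line'' and then read off the size; and counting long secants does not force a non-determined direction when $n\geq 3$, so the easy $q=p$ argument (two points span a full line, contradicting the $0$-secant) has no cheap analogue. It is precisely the lacunary-polynomial degree estimate — the non-$p$-th-power part of a fully reducible divisor of $X^q-X$ having bounded degree — that supplies the exact constant $q-p+1$, and I would organize the proof so that all the combinatorial reductions above feed into a single clean application of that estimate.
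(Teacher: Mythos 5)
Your preliminary reductions are all correct and even match the paper's opening move: the cases of no $0$-secant or $c=0$ are disposed of identically, the counts $|\cS|\equiv c\pmod p$, $|\cS|\ge (c-1)q+c$ and $s_d\equiv 1\pmod p$ are right, the factorization $R(X,d)=f_d(X)^c g_d(X)^p$ is valid, and it is true that a single non-determined direction finishes the proof (its proper secants are tangents, so $c=1$ and $|\cS|=s_d\le q-p+1$). But the proof stops exactly at the point you yourself call the heart of the argument. You defer both ruling out $c\ge 2$ and producing a non-determined direction for $c=1$ to unspecified ``lacunary-polynomial machinery of R\'edei and Sz\H{o}nyi,'' without stating the lemma you would apply or checking any hypothesis. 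This is not an off-the-shelf application: for $c\ge 2$ your own bound gives $|\cS|\ge (c-1)q+c>q$, there are no tangents, and \emph{every} one of the $q+1$ directions is determined, whereas the classical lacunary/direction results concern affine sets of size at most $q$ and bound the number of determined directions -- as cited they yield no contradiction for $c\ge 2$ and, for $c=1$, no guarantee of a non-determined direction (a priori a $1$ mod $p$ set with a $0$-secant could be large and determine all directions; excluding that \emph{is} the proposition). So the entire content of the statement sits in an unproved black box, and the vague estimate you name (``the non-$p$-th-power part of a fully reducible divisor of $X^q-X$ has bounded degree'') is not a theorem you can point to in this generality.

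The paper's actual proof is short, self-contained, and runs in the opposite logical order: the size bound comes first and $c=1$ falls out at the end, with no case split on $c$ and no direction analysis at all. Identify the affine plane $\AG(2,q)$ (the complement of a fixed $0$-secant) with $\F_{q^2}$ and set $f(X):=\sum_{s\in \cS}(X-s)^{q-1}$. Since every line through a point of $\cS$ is a proper secant, counting along a pencil gives $|\cS|\equiv c\not\equiv 0 \pmod p$, so $f$ has degree exactly $q-1$. For $s\in\cS$ the values $(s-s')^{q-1}$, $s'\in\cS\setminus\{s\}$, are $(q+1)$-st roots of unity, constant on each line through $s$, and each of the $q+1$ lines through $s$ carries $c-1\pmod p$ further points of $\cS$; hence $f(s)=(c-1)\sum_{e^{q+1}=1}e=0$. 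Thus all $|\cS|$ points of $\cS$ are roots of a nonzero polynomial of degree $q-1$, giving $|\cS|\le q-1$ uniformly for all $1\le c\le p-1$; then every point of $\cS$ lies on a tangent, which forces $c=1$, and the congruence $|\cS|\equiv 1\pmod p$ sharpens the bound to $|\cS|\le q-p+1$. To salvage your R\'edei-polynomial framework you would have to supply an actual proof of the missing estimate, which in this setting is essentially equivalent to the proposition itself; the $\F_{q^2}$ root-counting argument is the clean substitute.
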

\begin{proof}
If $\cS$ does not have $0$-secants, or if $c=0$, then 
$\cS$ meets each line in $c$ mod $p$ points; hence the result follows. So we may assume that $\cS$ is an affine point set and $1\leq c \leq p-1$. Identify $\AG(2,q)$ with $\F_{q^2}$. Note that three points are collinear if and only if for the corresponding elements $a,b,c$, we have $(a-b)^{q-1}=(a-c)^{q-1}$ (see for example \cite{Sziklaipoly}).
Define
\[f(X):=\sum_{s\in \cS}(X-s)^{q-1}.\]

Counting points of $\cS$ on lines incident with a point of $\cS$ gives $|\cS|\equiv c \pmod p$ and hence the degree of $f$ is $q-1$.
For $s\in \cS$ we have $f(s)=(c-1)\sum_{e^{q+1}=1}e=0$, thus $|\cS|\leq q-1$ and hence $|\cS|\leq q-p+c$ since this is the largest integer smaller than $q-1$ and congruent to $c$ mod $p$. Point sets of size less than $q+2$ have tangents, thus it follows that $c=1$.
\end{proof}

For mod $p$ generalized KM-arcs this gives the following result.

\begin{proposition}
\label{prop0b}
	If for a mod $p$ generalized KM-arc $\cS$ of type $(0, m, t)_p$, $t=m$ holds, then $t=m \in \{0,1\}$ or $\cS$ cannot have $0$-secants. 
\end{proposition}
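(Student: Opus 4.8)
The goal is to prove Proposition~\ref{prop0b}: for a mod $p$ generalized KM-arc $\cS$ of type $(0,m,t)_p$ with $t=m$, either $t=m\in\{0,1\}$ or $\cS$ has no $0$-secants. The plan is to recognize that the hypothesis $t=m$ places us exactly in the setting of Definition~\ref{holvagy}: a mod $p$ generalized KM-arc of type $(0,c,c)_p$ is precisely a $c$ mod $p$ intersecting point set with $c=t=m$. This reduction is the key observation, since it lets me invoke the already-proven Proposition~\ref{exercise} directly.

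Concretely, first I would set $c:=t=m$ and verify the identification with Definition~\ref{holvagy}: each point $R\in\cS$ lies on a $c_p$-secant and on $q$ further $c_p$-secants, so \emph{every} line through a point of $\cS$ is a $c_p$-secant, which is exactly the condition that each line meeting $\cS$ in at least one point meets it in $c\pmod p$ points. Thus $\cS$ is a $c$ mod $p$ intersecting point set. Second, I would apply Proposition~\ref{exercise}, which asserts that such a set either meets every line in $c\pmod p$ points, or else $c=1$ and $|\cS|\le q-p+1$.

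Finally I would translate the dichotomy back into the language of the proposition. In the first alternative, every line (including any potential $0$-secant) meets $\cS$ in $c\pmod p$ points; if $c\neq 0$ this forces every line to be proper, so $\cS$ has no $0$-secants, as required. In the second alternative we directly obtain $c=1$, i.e.\ $t=m=1$. Together with the trivial case $c=0$ absorbed into ``every line in $c\pmod p$'', this yields $t=m\in\{0,1\}$ or no $0$-secants, completing the proof.

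I do not anticipate a serious obstacle here: the entire content is the translation between the two equivalent formulations and the citation of Proposition~\ref{exercise}. The only point demanding a little care is the bookkeeping of the case $c=0$, which must be routed into the ``every line in $c\pmod p$ points'' branch (so that $0$-secants are permitted precisely when $c=0$), ensuring the final statement reads as the clean disjunction $t=m\in\{0,1\}$ \emph{or} no $0$-secants.
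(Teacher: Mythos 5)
Your proof is correct and is essentially the paper's own argument: the paper states Proposition~\ref{prop0b} as an immediate consequence of Proposition~\ref{exercise}, using exactly the identification (noted in Definition~\ref{holvagy}) of mod $p$ generalized KM-arcs of type $(0,c,c)_p$ with $c$ mod $p$ intersecting point sets. Your case bookkeeping --- $c\neq 0$ in the first branch excludes $0$-secants, while $c=0$ and the $c=1$ branch land in $t=m\in\{0,1\}$ --- is precisely the translation the paper leaves implicit.
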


\begin{proposition}
\label{prop0a}
If for a mod $p$ generalized KM-arc $\cS$ of type $(0, m, t)_p$, $t=m$ holds, then $t = m = 0$, or $\cS$ is a set of $t$ collinear points, or $\cS$ is a unital.
\end{proposition}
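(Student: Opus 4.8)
The plan is to use that the hypothesis $t=m$ turns $\cS$ into a $c$ mod $p$ intersecting point set with $c:=m=t$ (Definition \ref{holvagy}), so that Propositions \ref{prop0b} and \ref{exercise} become available, and then to split according to whether $\cS$ admits a $0$-secant. If $c=0$ there is nothing to prove, since this is exactly the conclusion $t=m=0$; so I would assume $1\le c\le p-1$ for the rest. I would first combine the two cited propositions to cut the cases down: by Proposition \ref{prop0b}, as soon as $c\ge 2$ the set $\cS$ has no $0$-secant, and by Proposition \ref{exercise} a $c$ mod $p$ intersecting set that \emph{does} have a $0$-secant forces $c=1$ together with $|\cS|\le q-p+1$ (the other branch of Proposition \ref{exercise} would make a $0$-secant meet $\cS$ in $c\not\equiv 0$ points). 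Hence only two regimes survive: (i) $\cS$ has a $0$-secant, $c=1$, and $|\cS|\le q-p+1$; and (ii) $\cS$ has no $0$-secant and meets every line in $c$ mod $p$ points, so in particular $\cS$ is a blocking set.

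In regime (i) I would push the polynomial computation of Proposition \ref{exercise} one step further. Placing a $0$-secant at infinity and setting $f(X)=\sum_{s\in\cS}(X-s)^{q-1}$ over the field modelling $\AG(2,q)$, every point of $\cS$ is a root of the degree-$(q-1)$ polynomial $f$. Analysing how the $1$ mod $p$ condition forces the roots to distribute along the pencils of affine lines, I would argue that $\cS$ must lie on a single line, i.e.\ be a set of collinear points of size $\equiv 1\pmod p$, which is the collinear alternative.

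In regime (ii) the target is the unital alternative, with a full line as the degenerate sub-case. When $c=1$, $\cS$ is a $1$ mod $p$ blocking set, and the natural plan is to produce enough extra structure — ideally that each point of $\cS$ carries a unique tangent, so that $\cS$ is a regular semioval and Result \ref{regsemi} applies — after which the oval is discarded because $2\not\equiv 1\pmod p$. One would also have to rule out the possibility $c\ge 2$ here, since the reduction above leaves $c\ge2$ inside regime (ii).

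The main obstacle is precisely regime (ii). The mod $p$ intersection data alone do \emph{not} isolate lines and unitals among blocking sets: Baer subplanes and unions of $\equiv 1\pmod p$ lines through a common point are also $1$ mod $p$ blocking sets without $0$-secants, and for $c\ge 2$ a union of pairwise disjoint Baer subplanes meets every line in $c$ mod $p$ points. Consequently the crux is to supply a genuine structural characterisation — via the regular-semioval and minimal-blocking-set classifications of Results \ref{regsemi} and \ref{AnuragEtAll} — strong enough to eliminate these competitors, and (for the stated conclusion to hold) to exclude $c\ge 2$ altogether. This is the step where I expect to need the deepest input, and where any additional constraint of the intended setting would have to be used.
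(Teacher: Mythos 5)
Your reduction via Propositions \ref{prop0b} and \ref{exercise} matches the paper's opening moves, and your diagnosis of regime (ii) is correct as far as it goes: for a bare $c$ mod $p$ intersecting point set the stated conclusion is simply unreachable (a Baer subplane is a $1$ mod $p$ intersecting set without $0$-secants that is neither collinear nor a unital, and your disjoint-Baer examples kill $c\ge 2$). But the idea you are missing is not a deeper classification of mod $p$ sets --- no such classification can exist, precisely because of these examples. The resolution is that the proposition is meant to be read, and is only ever applied (in the proof of Theorem \ref{main}), with $m$ and $t$ the \emph{exact} intersection numbers of a generalized KM-arc of type $(0,m,t)$ satisfying $t\equiv m\pmod p$; the paper's own one-line treatment of your regime (ii), ``follows from Theorem \ref{degencomb}'', only parses under this reading. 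Then $\cS$ without $0$-secants is a set of type $(m,t)$, Theorem \ref{degencomb} (which rests on Result \ref{AnuragEtAll}, not on Result \ref{regsemi}) lists the possibilities --- a full line, a unital, the complement of a Baer subplane, the complement of a point --- and the congruence $t\equiv m\pmod p$ discards the complement of a point and files the complement of a Baer subplane under $t\equiv m\equiv 0$. Your plan to route through Result \ref{regsemi} cannot be executed: that result requires each point to lie on exactly one tangent ($t=1$ exactly), which nothing in your regime (ii) provides.

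Regime (i) is also left as a sketch where the paper has a complete and much simpler argument, again exploiting exactness rather than further polynomial work. By Proposition \ref{exercise}, $|\cS|\le q-p+1\le q-1$, so for $R\in\cS$ the $q+1$ lines through $R$ carry at most $q-2$ points of $\cS\setminus\{R\}$; hence at least $3$ lines through $R$ are tangents. Tangents are $1$-secants, so $1\in\{m,t\}$ as exact values; since a point of $\cS$ lies on at most one $t$-secant, at least two of these tangents are $m$-secants, forcing $m=1$ (if $m=t$ the same count gives $m=t=1$ directly). Then any two points of $\cS$ span a $t$-secant while each point lies on a unique one, so $\cS$ is a set of $t$ collinear points. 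Your alternative --- analysing how the roots of $f(X)=\sum_{s\in\cS}(X-s)^{q-1}$ distribute along pencils --- is never carried out, and in the literal mod $p$ setting (where lines through points of $\cS$ could a priori have $1,\,p+1,\,2p+1,\dots$ points) the collinearity claim would need a genuine argument that your sketch does not supply. In short: your case split is the paper's, but both decisive steps are missing from the proposal, and in the paper both are closed by using the two-valued intersection pattern of a generalized KM-arc, not by strengthening the mod $p$ analysis.
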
 
\begin{proof}
If $\cS$ has no $0$-secants then the result follows from Theorem \ref{degencomb}.

If $\cS$ has $0$-secants, then by Proposition \ref{prop0b}, we may assume $t = m = 1$. By Proposition \ref{exercise}, $|\cS|\leq q-1$ and hence each point of $\cS$ is incident with at least $3$ tangents. It follows that $m=1$ and hence $\cS$ is a set of $t$ collinear points.
\end{proof}

\section{Further generalization}

In this section, we generalize further the concept of KM-arcs. 

\medskip

Throughout this section, $\cal A$ will be a proper subset of $\Pi_q$, $q=p^n$, with the following property.
For each point $R\in \cal A$,
there exist integers $0 \leq m_R,\, t_R \leq p-1$ such that there is {\it at most} one line which is incident with $R$ and meets $\cal A$ in $t_R$ mod $p$ points and the other lines incident with $R$ meet $\cal A$ in $m_R$ mod $p$ points. Points of $\cal A$ incident with exactly one $t_R$ mod $p$ secant and with $q$ $m_R$ mod $p$ secants (and hence $t_R \neq m_R$) will be called \emph{regular}, the other points of $\cal A$ will be called \emph{irregular}. If $R$ is regular, then the unique line incident with $R$ and meeting $\cal A$ in $t_R$ mod $p$ points will be called \emph{renitent}. 

\medskip

Note that we get back the definition of a mod $p$ generalized KM-arc if $m_R$ and $t_R$ do not depend on the choice of the point $R\in \cal A$.
However, it will turn out that for regular points these values do not depend on the choice the point. 

\begin{proposition}
	\label{propa}
	If $Q$ is regular then $t_Q \equiv |{\cal A}| \pmod p$. If $Q$ is irregular then $m_Q \equiv |{\cal A}| \pmod p$. 
\end{proposition}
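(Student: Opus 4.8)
The plan is to use a single incidence count on the pencil of $q+1$ lines through $Q$ and then reduce everything modulo $p$. The key arithmetic facts are that $q=p^n\equiv 0 \pmod p$ and $q+1\equiv 1 \pmod p$, which make the global line count collapse to something depending only on the local residue pattern at $Q$.

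First I fix $Q\in\cA$ and sum the intersection numbers $|\ell\cap\cA|$ over the $q+1$ lines $\ell$ through $Q$. These lines partition $\Pi_q\setminus\{Q\}$ and each passes through $Q$, so every point of $\cA\setminus\{Q\}$ is counted exactly once while $Q$ itself is counted $q+1$ times. Hence the sum equals $(|\cA|-1)+(q+1)=|\cA|+q$, which is $\equiv |\cA| \pmod p$.

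Second, I evaluate the same sum modulo $p$ from the local structure at $Q$. The defining property of $\cA$ forces the residues mod $p$ of the $q+1$ lines through $Q$ into exactly one of two shapes: either all lines share a common residue $m_Q$, or exactly one line carries an exceptional residue $t_Q\neq m_Q$ while the other $q$ lines carry $m_Q$. If $Q$ is regular we are in the second shape, so the sum is $\equiv t_Q+q\,m_Q\equiv t_Q \pmod p$, and comparison with the first step yields $t_Q\equiv |\cA| \pmod p$. If $Q$ is irregular, all $q+1$ lines must share the residue $m_Q$, so the sum is $\equiv (q+1)m_Q\equiv m_Q \pmod p$, giving $m_Q\equiv |\cA| \pmod p$.

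The only step requiring care --- and the one I would justify explicitly --- is the dichotomy of local shapes, specifically the claim that an irregular point is residue-constant. This is where one must read the definitions precisely: the property permits at most one exceptional line, and the parenthetical ``(and hence $t_R\neq m_R$)'' in the definition of regular insists that a genuine renitent line have residue distinct from the common one. A point that fails to be regular therefore cannot possess such a distinguished line, so all $q+1$ lines through it share a single residue $m_Q$. Once this case analysis is settled the proposition is immediate; no polynomial methods or stability results are needed here, only the collapse of the count modulo $p$.
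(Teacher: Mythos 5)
Your proof is correct and takes essentially the same approach as the paper, whose entire proof is the one-line remark that the claim ``follows by counting the points of ${\cal A}$ on the lines incident with $Q$'' --- precisely the double count $\sum_{\ell \ni Q}|\ell\cap\cA| = (|\cA|-1)+(q+1)\equiv |\cA| \pmod p$ that you carry out. Your explicit verification that an irregular point has all $q+1$ of its lines carrying the single residue $m_Q$ is exactly the reading of the definition that the paper leaves implicit, so nothing further is needed.
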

\begin{proof}
    It follows by counting the points of ${\cal A}$ on the lines incident with $Q$.
\end{proof}

\begin{theorem}
\label{verygeneral}
	For the point set $\cal A$, one of the following holds:
	\begin{enumerate}[\rm(1)]
		\item Each point of $\cal A$ is regular. Then for any two points $P,R \in \cal A$ it holds that $t_P=t_R$ and $m_P=m_R$, i.e. $\cal A$ is a mod $p$ generalized KM-arc of type $(0, m, t)_p$ with $m\neq t$.
		\item  Each point of $\cal A$ is irregular and hence $\cal A$ is a $c$ mod $p$ intersecting point set, cf. Definition \ref{holvagy} and Section \ref{cmodpsection}.
		\item There is a unique irregular point $Q$ and the renitent lines are incident with this point. In this case $\cA \setminus \{Q\}$ is as in  (1) or (2) and in the former case the proper $t_{p}$-secants are concurrent.
	\end{enumerate}
\end{theorem}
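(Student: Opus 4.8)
The plan is to reduce everything to a single residue and one structural dichotomy. Set $c$ with $c \equiv |\cA| \pmod p$ and $0 \le c \le p-1$. By Proposition \ref{propa} every regular point $R$ has $t_R = c$, so the renitent line of $R$, which I denote $r_R$, is a $c_{p}$-secant while the remaining $q$ lines through $R$ are $m_R$-mod-$p$-secants with $m_R \not\equiv c \pmod p$; and every irregular point $R$ has $m_R = c$. I would first observe that, since the defining property of $\cA$ allows at most one exceptional line through each point, a point is irregular precisely when it has no exceptional line at all, i.e. when all $q+1$ lines through it have a common residue, which by counting equals $c$. This yields the dichotomy I use throughout: $R$ is irregular iff every line through $R$ is a $c_{p}$-secant; and for a regular point $R$ on a line $\ell$, the line $\ell$ equals $r_R$ exactly when $\ell$ is a $c_{p}$-secant, and otherwise $\ell$ is a non-renitent line whose residue is $m_R$. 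In particular the renitent lines are exactly the $c_{p}$-secants through regular points, and each regular point lies on exactly one of them.

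The trichotomy then comes from the partition of $\cA$ into regular and irregular points. If there are no irregular points I fix a regular $P$; every $R \notin r_P$ is joined to $P$ by a non-$c_{p}$-secant, so $m_R = m_P$ by the dichotomy, and a short bootstrap through a second point promotes this to $m_R = m_P$ for all $R$ (equivalently, the relation ``joined by a non-$c_{p}$-secant'' is a complete multipartite graph on the regular points, with the renitent-line classes as parts, which is connected once there are at least two renitent lines, and $m_R$ is constant on it); the degenerate collinear case gives $m \equiv 1$ directly. Hence $\cA$ is a mod $p$ generalized KM-arc of type $(0,m,c)_p$ with $m \neq c$, which is case (1). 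If every point is irregular, then any line meeting $\cA$ contains an irregular point and is therefore a $c_{p}$-secant, so $\cA$ is a $c$ mod $p$ intersecting set, cf.\ Definition \ref{holvagy}; this is case (2).

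Suppose finally that both regular and irregular points occur. Fixing a regular $P$, each irregular $Q$ makes $PQ$ a $c_{p}$-secant (all lines through $Q$ are), hence $PQ = r_P$ and $Q \in r_P$. After checking that a collinear $\cA$ cannot be mixed (a collinear $\cA$ with a regular point forces $c \neq 1$, which then leaves no room for an irregular point, since a tangent is a $1_{p}$-secant), I pick $R^{*} \in \cA \setminus r_P$, which is regular, and the same reasoning puts every irregular point on $r_{R^{*}}$. Since $R^{*} \in r_{R^{*}} \setminus r_P$, the renitent lines $r_P$ and $r_{R^{*}}$ are distinct, so all irregular points lie in $r_P \cap r_{R^{*}}$, a single point; thus there is a unique irregular point $Q$, and running the argument with an arbitrary regular $S$ gives $Q \in r_S$, i.e.\ all renitent lines pass through $Q$. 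For $\cA' := \cA \setminus \{Q\}$ the only line through a regular $R$ whose residue changes is $r_R = RQ$, which drops from $c$ to $c-1$; so in $\cA'$ each point has $q$ lines of residue $m$ and one of residue $c-1$. Therefore, if $c-1 \not\equiv m \pmod p$ then $\cA'$ is as in (1), a mod $p$ generalized KM-arc of type $(0,m,c-1)_p$ whose proper $t_{p}$-secants are exactly the renitent lines and hence concur at $Q$; and if $c-1 \equiv m \pmod p$ then every line through every point of $\cA'$ has residue $m$, so $\cA'$ is as in (2).

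The engine of the whole argument is the dichotomy of the first paragraph, after which each case is short bookkeeping. I expect the main obstacle to be the degenerate collinear configurations: they must be separated out by hand because they interact with whether $c = 1$ and decide whether a collinear $\cA$ has any regular (or any irregular) points at all. This matters most in the mixed case, where the uniqueness of $Q$ is obtained as $r_P \cap r_{R^{*}}$ and is vacuous unless a point $R^{*}$ off $r_P$ exists; guaranteeing such a point is exactly the place where the collinear possibility has to be excluded first.
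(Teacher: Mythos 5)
Your proposal is correct and takes essentially the same approach as the paper: the residue dichotomy you extract from Proposition \ref{propa} (all lines through an irregular point are $c_p$-secants, while through a regular point exactly the renitent line is), the third-point bootstrap giving constancy of $m_R$, and the separate treatment of the collinear degenerations all mirror the published proof, as does the removal analysis of $Q$. The only cosmetic difference is that in the mixed case you trap the irregular points in $r_P \cap r_{R^{*}}$ for two regular points with distinct renitent lines, whereas the paper instead shows that two irregular points would force every point of $\cA$ to be irregular; both variants rest on the same dichotomy and yield the unique point $Q$ through which all renitent lines pass.
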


\begin{proof}
Let $a$ be an integer so that  $0\leq a \leq p-1$ and $|\cA|\equiv a \pmod p$.
If $\cA$ is a subset of a line, then $\cA$ is as in Case (1) (if $a\neq 1$) or as in Case (2) (if $a= 1$); thus from now on we may assume that $\cA$ contains three points in general position.

If each point is regular then by Proposition \ref{propa}, there exists $t$ such that renitent lines at the points of $\cal A$ are incident with $t$ mod $p$ points of $\cal A$. For $P,R\in \cal A$ either $|PR \cap {\cal A}| \not\equiv t \pmod p$ and hence $m_P=m_R$, or $PR$ is the unique renitent line incident with $P$ and with $R$. 
Take a point $Q\in {\cal A} \setminus PR$. The number of points of $\cal A$ in $QP$ and in $QR$ is not congruent to $t$ mod $p$, thus they are both congruent to $m_Q$ mod $p$, thus $m_P=m_R$.

Suppose that the points $Q_1$ and $Q_2$ are irregular.  
Then $m_{Q_1}=m_{Q_2}=t_{Q_1}=t_{Q_2}=a$. 
By the first paragraph, we may assume that there exists $P \in \cA \setminus Q_1Q_2$. We show that $P$ must be irregular. Since $|PQ_1 \cap \cA|  \equiv |PQ_2 \cap \cA| \pmod p$, it follows that $m_P=a$ as one of $PQ_1$ or $PQ_2$ is not renitent at $P$. Also $t_P = a$ by Proposition \ref{propa}. 
Starting from the two irregular points $P$ and $Q_1$ the same argument shows that also the points of $\cA \cap Q_1Q_2$ are irregular.
Thus all points are irregular and 
hence $\cA$ is a $|\cA|$ mod $p$ intersecting point set.

On the other hand if there is a unique irregular point $Q$,
then each line incident with this point is an $a$ mod $p$
secant. Also, by Proposition \ref{propa}, for any other (regular) point $P$, $t_P=a$. Hence all renitent lines pass through $Q$.
Finally, we prove $m_{P_1}=m_{P_2}$ for any two regular points. 
If $Q\notin P_1P_2$ then it is straightforward. If $Q\in P_1P_2$ then take a regular point $P_3\notin P_1P_2$. Then $Q\notin P_1P_3 \cup P_2P_3$ and hence $m_{P_1}=m_{P_3}$ and $m_{P_2}=m_{P_3}$.
After removing $Q$, either all regular points turn to be irregular, or all of them remain regular in this new point set.
\end{proof}

\section{Renitent lines are concurrent}
\label{renitentsection}

In this section, our aim is to prove that the $t_{p}$-secants of a mod $p$ generalized KM-arc $\cS$ of type $(0, m, t)_p$ meeting a fixed $m_{p}$-secant in $\cS$ are concurrent, when $t\neq m$.

Now we again define renitent lines in a very similar context.

\begin{definition}
Let $\cT$ be a point set of $\AG(2,q)$, $q=p^n$, $p$ prime. The line $\ell$ with slope $d$ is said to be renitent w.r.t. $\cT$ if there exists an integer $\mu$ such that $|\ell \cap \cT|\not\equiv \mu \pmod p$ and $|r \cap \cT| \equiv \mu \pmod p$ for each line $r\neq \ell$ with slope $d$.
\end{definition}

The next result can be viewed a generalization of \cite[Theorem 5]{semi}, see also \cite[Proposition 2]{seminuclear} and \cite[Remark 7]{dirszonyi}.

\begin{lemma}[Lemma of renitent lines]
	\label{renitent}
Let $\cT$ be a point set of $\AG(2,q)$, $2<q=p^n$, $p$ prime, such that $|\cT| \not\equiv 0 \pmod p$.
Then the renitent lines w.r.t. $\cT$ are concurrent.
\end{lemma}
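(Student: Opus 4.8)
The plan is to attach to $\cT$ a single-variable polynomial that records, for one fixed direction, the intersection numbers modulo $p$ of all lines in that parallel class, and then to read off from its top two coefficients both the scalar governing a possible renitent line and the intercept of that line. Write $\cT=\{(a_i,b_i)\}$, set $N=|\cT|$, $A=\sum_i a_i$ and $B=\sum_i b_i$ (elements of $\F_q$), and for a fixed finite slope $d$ define
\[
g_d(X)=\sum_{(a,b)\in\cT}\bigl(X+ad-b\bigr)^{q-1}\in\F_q[X].
\]
Since $c^{q-1}$ equals $1$ or $0$ according as $c\neq0$ or $c=0$ in $\F_q$, for every $k\in\F_q$ the value $g_d(k)$ equals $|\cT|-e_{d,k}$ reduced mod $p$, where $e_{d,k}=|\{Y=dX+k\}\cap\cT|$. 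Thus $g_d$ is exactly the function measuring the intersection numbers of the slope-$d$ lines modulo $p$.

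Next I would feed in renitence. Suppose the slope-$d$ class contains a renitent line $Y=dX+k_0$ with background value $\mu$; then $e_{d,k}\equiv\mu\pmod p$ for every $k\neq k_0$ and $e_{d,k_0}\not\equiv\mu\pmod p$. Hence $g_d(X)-(N-\mu)$ vanishes at the $q-1$ points $k\in\F_q\setminus\{k_0\}$ and is nonzero at $k_0$. As $\deg g_d\le q-1$, a polynomial of degree $\le q-1$ with $q-1$ prescribed roots is determined up to a scalar, so
\[
g_d(X)-(N-\mu)=c\cdot\frac{X^q-X}{X-k_0}=c\bigl(X^{q-1}+k_0X^{q-2}+\cdots\bigr)
\]
for some $c\in\F_q$. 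Comparing the coefficient of $X^{q-1}$ gives $c=[X^{q-1}]g_d=N$, and here the hypothesis enters decisively: since $|\cT|\not\equiv0\pmod p$ we get $c\neq0$, so $g_d$ genuinely has degree $q-1$ and we may divide by $c$. Comparing the coefficient of $X^{q-2}$ (using $q>2$, so the additive constant $N-\mu$ does not reach this coefficient) together with $\binom{q-1}{1}\equiv-1\pmod p$ yields
\[
c\,k_0=[X^{q-2}]g_d=\sum_{(a,b)\in\cT}(b-ad)=B-dA,\qquad\text{so}\qquad k_0=\frac{B-dA}{N}.
\]

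Finally I would read off the common point. The renitent line of slope $d$ is $Y=dX+(B-dA)/N$, and rewriting this as $N Y-B=d\,(N X-A)$ shows that it passes through the single point $P_0=\bigl(A/N,\,B/N\bigr)$, which is independent of $d$ (here $N$ is interpreted in $\F_p^\times\subseteq\F_q$, which is legitimate precisely because $N\not\equiv0$). Hence all renitent lines of finite slope are concurrent at this ``centroid'' $P_0$. The vertical direction is handled by the identical computation applied to $\tilde g(X)=\sum_{(a,b)}(X-a)^{q-1}$, whose top two coefficients force a vertical renitent line to be $X=A/N$, again through $P_0$; alternatively one invokes an affine change of coordinates together with the affine covariance of $P_0$. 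This proves the lemma. I expect the only genuinely delicate point to be the coefficient bookkeeping that pins down $c$ and $k_0$: the argument hinges on the leading coefficient being $N\not\equiv0$ (exactly the hypothesis, which guarantees $\deg g_d=q-1$ and legalizes dividing by $c$) and on $q>2$ ensuring the subleading coefficient $[X^{q-2}]g_d$ is untouched by the constant $N-\mu$.
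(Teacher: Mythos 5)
Your proof is correct, and it takes a genuinely different route from the paper's. The paper works with the R\'edei polynomial $H(U,V)=\prod_i(U+a_iV-b_i)$: fixing a slope $d$, it factors $H(U,d)$ according to the intersection pattern, with exponents $\beta_{w,d}\,p+\mu$ and $\alpha_d\,p+\tau$, multiplies through by $(U-a(d))^{p+\mu-\tau}$ so that the right-hand side takes the shape $(U^q-U)^{\mu}f(U^p)$, and then compares the two top coefficients to extract exactly the intercept you find, $a(d)=(B-Ad)/s$, and the common point $(A/s,B/s)$. You instead linearize: the power-sum polynomial $g_d(X)=\sum_{(a,b)\in\cT}(X+ad-b)^{q-1}$ records the intersection numbers mod $p$ directly, and renitence forces $g_d(X)-(N-\mu)$ to vanish on $\F_q\setminus\{k_0\}$ but not at $k_0$, hence to equal $c\,(X^q-X)/(X-k_0)$; reading off $[X^{q-1}]$ and $[X^{q-2}]$ gives $c=N$ and $ck_0=B-dA$. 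This buys you a cleaner argument in two places: you avoid the paper's exponent bookkeeping and the multiplication trick entirely (at the harmless cost of seeing only mod-$p$ information, which is all the lemma uses), and you dispatch the vertical direction by the same computation with $\tilde g(X)=\sum(X-a)^{q-1}$, whereas the paper excludes $(\infty)$ from each class $\cD_\mu$ and, when all $q+1$ directions carry renitent lines, patches the last one via an affinity --- note that your use of $q>2$ (keeping the constant $N-\mu$ away from the $X^{q-2}$ coefficient) replaces the paper's use of it in that patching step. One small remark: $c\neq 0$ already follows from $h(k_0)\neq 0$, independently of the hypothesis; the hypothesis $N\not\equiv 0\pmod p$ is what you genuinely need to solve for $k_0=(B-dA)/N$ (and in fact, if $N\equiv 0$ then summing over a parallel class gives $N\equiv\tau-\mu$, so no renitent line exists and the lemma is vacuous --- the same congruence appears at the start of the paper's proof).
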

\begin{proof}
For each $0\leq \mu \leq p-1$ we define the subset of directions $\cD_{\mu} \subseteq \ell_{\infty}$ in the following way: a direction $(d)$ is in $\cD_{\mu}$ if and only if there are exactly $q-1$ affine lines with direction $(d)$ such that each of them meets $\cT$ in $\mu$ mod $p$ points.
First we show that the renitent lines with slope in $\cD_{\mu}$ are concurrent. It will turn out that their point of concurrency depends only on $\cT$ and not on $\mu$. Thus each of the renitent lines will be incident with this point. For the sake of simplicity we will say `renitent line', instead of `renitent line with slope in $\cD_{\mu}$'.

Suppose $\cD_{\mu} \neq \emptyset$ and put $s=|\cT|$, then $s \equiv (q-1)\mu+\tau\equiv \tau-\mu \pmod p$, where each renitent line meets $\cT$ in $\tau\equiv s+\mu$ points modulo $p$ for some $0\leq \tau \leq p-1$. Note that $\tau\neq \mu$. If $|\cD_{\mu}|<q+1$, then we can always assume $(\infty) \notin \cD_{\mu}$.
If $|\cD_{\mu}|=q+1$, then it is enough to prove that renitent lines with slope in $\cD_{\mu} \setminus (\infty)$ are concurrent. Indeed, if we prove this, then after a suitable affinity we get that any $q$ of the $q+1$ renitent lines are concurrent. Since $q>2$, the result then follows for all renitent lines.

Let $\cT=\{(a_i,b_i)\}_{i=1}^{s}$ and
\[H(U,V):=\prod_{i=1}^{s}(U+a_iV-b_i)=\sum_{j=0}^{s}h_j(V)U^{s-j},\]
that is, the R\'edei polynomial of $\cT$. Here $h_j(V)$ is a polynomial of degree at most $j$.
Note that $h_0(V)=1$ and $h_1(V)=AV-B$, where $A=\sum_{i=1}^{s}a_i$ and $B=\sum_{i=1}^{s}b_i$.
For each $d\in \F_q$, $U=k$ is a root of $H(U,d)$ with multiplicity $r$ if and only if the line with equation
$Y=dX+k$ meets $U$ in exactly $r$ points.
Let $(0,a(d))$ be the intersection of the line $X=0$ and the unique renitent line through $(d)\in \cD_{\mu}$.
Then the lines incident with $(d)$ yield
\[H(U,d)=(U-a(d))^{\alpha_d p + \tau}\prod_{w\in \F_q\setminus \{a(d)\}}(U-w)^{\beta_{w,d}\, p + \mu},\]
with $\alpha_d p+\tau+(q-1)\mu+\sum_{w\in \F_q \setminus \{a(d)\}} \beta_{w,d}\, p=s$, for some $\alpha_d, \beta_{w,d}\in \F_q$. Multiplying both sides by $(U-a(d))^{p+\mu-\tau}$ yields
\[H(U,d)(U-a(d))^{p+\mu-\tau}=(U-a(d))^{(\alpha_d+1) p + \mu}\prod_{w\in \F_q\setminus \{a(d)\}}(U-w)^{\beta_{w,d}\, p + \mu}.\]
Here the right-hand side can be written as
\[(U^q-U)^{\mu} f(U^p),\]
for some polynomial $f$. The degrees at both sides are $s+p+\mu-\tau$.
The second greatest degree on the right-hand side is at most $s+\mu-\tau$. 
Hence the coefficient of $U^{s+p+\mu-\tau-1}$ is zero on the left-hand side, i.e. \[h_1(d)-(p+\mu-\tau)a(d)=0.\]
Since $\tau \neq \mu$, it follows that $a(d)=h_1(d)/(\mu-\tau)=-h_1(d)/s=(B-Ad)/s$. 
Note that $a(d)$ does not depend on the choice of $\mu$.
It follows that $Y=dX+(B-Ad)/s$ is the equation of the renitent line through $(d)$. 
For $d\in \F_q$, these lines are concurrent, their common point is $(A/s,B/s)$.
\end{proof}

\subsection{Easy consequences of the Lemma of Renitent lines}
\begin{proposition}
	\label{sok}
	If $t \neq m$ holds for a mod $p$ generalized KM-arc $\cS$ of type $(0, m, t)_p$ in $\PG(2,q)$, then for any $m_{p}$-secant $\ell$ the $t_{p}$-secants incident with the points of $\ell \cap \cS$ are concurrent.
\end{proposition}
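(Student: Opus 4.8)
The plan is to reduce the claim to the Lemma of Renitent lines (Lemma~\ref{renitent}). If $\ell\cap\cS=\emptyset$ there are no $t_{p}$-secants through points of $\ell\cap\cS$ and the statement is vacuous, so I would assume $\ell$ is a proper $m_{p}$-secant. I would then choose coordinates so that $\ell=\ell_{\infty}$ is the line at infinity and put $\cT:=\cS\cap\AG(2,q)=\cS\setminus\ell$, the affine part of $\cS$.

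The heart of the argument is a dictionary between the relevant $t_{p}$-secants of $\cS$ and the renitent lines w.r.t.\ $\cT$. Fix $R\in\ell\cap\cS$; as a point at infinity it is a direction $(d)$, and the $q$ affine lines with slope $d$ are exactly the lines of $\PG(2,q)$ through $R$ other than $\ell$. Since $R\in\cS$, each such line $r$ satisfies $|r\cap\cS|=|r\cap\cT|+1$. As $R$ is regular it lies on a unique $t_{p}$-secant; because $\ell$ is an $m_{p}$-secant and $t\not\equiv m\pmod p$, this $t_{p}$-secant is one of the affine lines through $R$, the remaining $q-1$ affine lines through $R$ being $m_{p}$-secants. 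Translating via the relation above, $q-1$ of the lines with slope $d$ meet $\cT$ in $m-1$ points mod $p$, while the $t_{p}$-secant meets $\cT$ in $t-1\not\equiv m-1\pmod p$ points. Taking $\mu=m-1$, this is exactly the renitent condition, so the $t_{p}$-secant through $R$ is renitent w.r.t.\ $\cT$.

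Next I would verify the hypothesis $|\cT|\not\equiv 0\pmod p$ of Lemma~\ref{renitent}. Counting the points of $\cS$ along the $q+1$ lines through a fixed point of $\cS$ (one $t_{p}$-secant, $q$ $m_{p}$-secants) and using $q\equiv 0\pmod p$ gives $|\cS|\equiv t\pmod p$, as in Proposition~\ref{propa}. Hence $|\cT|=|\cS|-|\ell\cap\cS|\equiv t-m\pmod p$, which is nonzero modulo $p$ precisely because $t\neq m$ with $0\le m,t\le p-1$. Lemma~\ref{renitent} then shows that all renitent lines w.r.t.\ $\cT$ pass through one common point; in particular the $t_{p}$-secants through the points of $\ell\cap\cS$, being among these renitent lines, are concurrent.

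I expect the only delicate steps to be the $\pm1$ bookkeeping that turns the intersection pattern of $\cS$ along a parallel class into the renitent condition for $\cT$, and the check that $|\cT|\not\equiv0\pmod p$; once both are in place, the Lemma of Renitent lines supplies the concurrency directly.
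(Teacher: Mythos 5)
Your proposal is correct and follows essentially the same route as the paper: the paper's proof also takes $\ell$ as the line at infinity, sets $\cT=\cS\setminus\ell$, verifies $|\cT|\equiv t-1+(q-1)(m-1)\equiv t-m\not\equiv 0\pmod p$, and invokes the Lemma of Renitent lines (Lemma~\ref{renitent}). Your only additions are to spell out explicitly the bookkeeping identifying the $t_{p}$-secants through points of $\ell\cap\cS$ with renitent lines (with $\mu=m-1$), which the paper leaves implicit.
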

\begin{proof}
We may consider $\ell$ as the line at infinity and so $\cT:=\cS \setminus \ell$ is an affine point set in the affine plane $\PG(2,q) \setminus \ell$.
Since $|\cT| \equiv t-1+(q-1)(m-1)\equiv t-m \not\equiv 0 \pmod p$, we can apply Lemma \ref{renitent}.
\end{proof}

The next propositions are easy corollaries of the proposition above.

\begin{proposition}
	\label{prop2} For a generalized KM-arc $\cS$ of type $(0, m, t)$ in $\PG(2,q)$,
	if $1<t<q$ and $t \not\equiv m \pmod p$, then $m \mid q$.
\end{proposition}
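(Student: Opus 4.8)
The plan is to exploit the concurrency supplied by Proposition \ref{sok} to control, for each point $C \notin \cS$, the number $t(C)$ of $t$-secants through $C$, and then to run a short second-moment count that forces some point outside $\cS$ to lie on \emph{exactly} one $t$-secant. Once such a point is found, the divisibility $m \mid q$ drops straight out of Lemma \ref{combinatorial}(3). The hypothesis $t \not\equiv m \pmod p$ is exactly what makes $\cS$ a mod $p$ generalized KM-arc with distinct residues, so that Proposition \ref{sok} is applicable.

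First I would dispose of trivialities: if $m = 1$ the claim holds, so assume $m \geq 2$; and $t \not\equiv m \pmod p$ gives $t \neq m$, so each point of $\cS$ lies on a unique $t$-secant and the $t$-secants partition $\cS$ into $\tau = q(m-1)/t + 1$ classes of size $t$. I would also record that two distinct $t$-secants can only meet outside $\cS$ (a common point inside $\cS$ would lie on two $t$-secants), a fact used repeatedly.

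The key step, and the step I expect to be the main obstacle, is a \emph{gap} statement: for $C \notin \cS$ one has either $t(C) \leq 1$ or $t(C) \geq m$. To establish it, suppose two $t$-secants $n, n'$ pass through $C$ and pick $P \in n \cap \cS$, $Q \in n' \cap \cS$. As $P$ and $Q$ lie on different $t$-secants, $PQ$ is an $m$-secant, so Proposition \ref{sok} applies: the $t$-secants through the points of $PQ \cap \cS$ are concurrent, and since they include $n$ and $n'$ their common point is $n \cap n' = C$. Hence every one of the $m$ points of $PQ \cap \cS$ lies on a $t$-secant through $C$; since $C \notin PQ$, the line $PQ$ meets the $t(C)$ $t$-secants through $C$ in $t(C)$ distinct points, whence $m = |PQ \cap \cS| \leq t(C)$. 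This is the delicate part, as it is precisely where concurrency upgrades ``two $t$-secants through $C$'' into ``at least $m$''.

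With the gap in hand, the finish is routine counting. The two identities
\[
\sum_{C \notin \cS} t(C) = \tau(q+1-t), \qquad \sum_{C \notin \cS} \binom{t(C)}{2} = \binom{\tau}{2}
\]
(incidences of exterior points with $t$-secants, and pairs of $t$-secants counted by their intersection points) give $\sum_{C} t(C)^2 = \tau(\tau-1) + \tau(q+1-t)$. If no exterior point lay on exactly one $t$-secant, then every value $t(C)$ would be $0$ or at least $m$, so $t(C)^2 \geq m\,t(C)$; summing, dividing by $\tau$ and by $m-1>0$, and substituting $\tau - 1 = q(m-1)/t$ collapses everything to $q \leq t$, contradicting $t < q$. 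Hence some $C \notin \cS$ has $t(C) = 1$, and Lemma \ref{combinatorial}(3) yields $t \equiv t(C)\,t \equiv t - q \pmod m$, i.e.\ $m \mid q$.
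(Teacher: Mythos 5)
Your proof is correct, and its two pillars are exactly those of the paper: the gap statement (for $C \notin \cS$, either $t(C)\leq 1$ or $t(C)\geq m$), which the paper asserts as an immediate consequence of Proposition \ref{sok} and which you derive in full detail, and the finish via Lemma \ref{combinatorial}(3) at a point lying on exactly one $t$-secant. Where you genuinely differ is the middle counting step. The paper argues locally: fix one $t$-secant $\ell$; if no point of $\ell\setminus\cS$ lies on exactly one $t$-secant, then each of the $q+1-t$ points of $\ell\setminus\cS$ carries at least $m-1$ further $t$-secants, and since every $t$-secant other than $\ell$ meets $\ell$ in exactly one point outside $\cS$, this forces $(q+1-t)(m-1)\leq q(m-1)/t$, i.e.\ $(q+1-t)t\leq q$, impossible for $1<t<q$. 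You instead run a global second-moment count over all exterior points, using the incidence identity $\sum_C t(C)=\tau(q+1-t)$ and the pair identity $\sum_C\binom{t(C)}{2}=\binom{\tau}{2}$ (the latter valid, as you note, because any two $t$-secants meet outside $\cS$), together with $t(C)^2\geq m\,t(C)$; after substituting $\tau-1=q(m-1)/t$ this collapses to the very same quadratic obstruction $(t-1)(t-q)\geq 0$. So both counts hit the identical inequality; the paper's version is shorter and needs the gap only at the points of one chosen $t$-secant, while yours requires no distinguished $t$-secant and, as a bonus, makes explicit the concurrency argument behind the gap (picking $P\in n\cap\cS$, $Q\in n'\cap\cS$, observing $PQ$ is an $m$-secant avoiding $C$, and upgrading two $t$-secants through $C$ to $m$ of them) that the paper leaves to the reader.
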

\begin{proof}
	It follows from Proposition \ref{sok} that for each $P\notin \cS$, if $P$ is incident with more than one $t$-secant then it is incident with at least $m$ $t$-secants. 
	Consider a $t$-secant $\ell$. If there is a point of $\ell \setminus \cS$ incident with a unique $t$-secant ($\ell$), then by part (3) of Lemma \ref{combinatorial}
    $m \mid q$. If there is no such point, then each $P\in \ell \setminus \cS$ is incident with at least $m-1$ $t$-secants other than $\ell$.
    Then the number of $t$-secants other than $\ell$ is at least $(q+1-t)(m-1)$. On the other hand the number of $t$-secants different from $\ell$ is $|\cS|/t - 1 = q(m-1)/t$.
    It follows that
	\[(q+1-t)(m-1)t\leq q(m-1),\]
	a contradiction, when $m > 1$.
\end{proof}

\begin{lemma}
\label{q-1}
If $t\neq m$ holds for a mod $p$ generalized KM-arc $\cS$ of type $(0, m, t)_p$ in $\PG(2,q)$, then either the proper $t_{p}$-secants pass through a common point or 
for each $P\notin \cS$ it holds that $|\{Q \colon QP \mbox{ is a $t_{p}$-secant }\}\cap \cS|\leq q-1$.
\end{lemma}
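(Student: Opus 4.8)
The plan is to first reformulate the quantity to be bounded. Writing $g_R$ for the unique $t_{p}$-secant (renitent line) through a point $R\in\cS$ — which exists since $t\neq m$ — one checks immediately that
\[\{Q\colon QP\text{ is a }t_{p}\text{-secant}\}\cap\cS=\{R\in\cS\colon P\in g_R\}=:\cT_P,\]
because a point $R\in\cS$ lies on a $t_{p}$-secant through $P$ exactly when that secant is $R$'s own renitent line $g_R$. In particular, if \emph{every} renitent line passes through $P$, then the proper $t_{p}$-secants are concurrent (at $P$), which is the first alternative. Hence I may assume the proper $t_{p}$-secants are \emph{not} concurrent and prove $|\cT_P|\le q-1$ for an arbitrary $P\notin\cS$.

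Non-concurrency provides, for each such $P$, a renitent line $h_0\not\ni P$; I fix a point $R_0\in h_0\cap\cS$, so that $g_{R_0}=h_0$ misses $P$. The heart of the argument is to project from $R_0$. Through $R_0$ there pass exactly one $t_{p}$-secant, namely $h_0$, and $q$ proper $m_{p}$-secants. Every point of $\cT_P$ avoids $h_0$ (a point of $h_0\cap\cS$ has renitent line $h_0\not\ni P$) and is distinct from $R_0$, so $\cT_P$ is distributed among the $q$ proper $m_{p}$-secants through $R_0$. I would then bound the contribution of each $m_{p}$-secant $\ell\neq R_0P$ through $R_0$ separately using Proposition \ref{sok}: the renitent lines through the points of $\ell\cap\cS$ share a common point $C(\ell)$, and since $R_0\in\ell\cap\cS$ we get $C(\ell)\in g_{R_0}=h_0$, whence $C(\ell)\neq P$. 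Consequently every $R\in\ell\cap\cT_P$ has $g_R$ passing through the two distinct points $P$ and $C(\ell)$, so $g_R=\overline{PC(\ell)}$; as $P\notin\ell$ we have $\overline{PC(\ell)}\neq\ell$, and hence at most one point of $\ell$ can lie in $\cT_P$.

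This already yields $|\cT_P|\le q$, and the final saving comes from the distinguished line $R_0P$. It is one of the $q$ proper $m_{p}$-secants through $R_0$, but it contains no point of $\cT_P$ at all: for $R\in(R_0P)\cap\cS$ the line $RP$ equals $R_0P$, an $m_{p}$-secant, so $RP$ cannot be the $t_{p}$-secant $g_R$. Thus one of the $q$ pencil lines contributes $0$ and the remaining $q-1$ contribute at most $1$ each, giving $|\cT_P|\le q-1$, as required.

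The step I expect to be the crux is the per-secant bound in the second paragraph: recognising that Proposition \ref{sok} pins the renitent lines of $\ell\cap\cS$ to a common centre lying on $h_0$, so that this centre is distinct from $P$, and therefore at most one renitent line through $P$ can arise from each pencil line. The surrounding bookkeeping — the reformulation $\cT_P=\{R\in\cS\colon P\in g_R\}$, the exclusion of $h_0$, and the exclusion of $R_0P$ that improves the bound from $q$ to $q-1$ — is elementary once that reformulation is in place.
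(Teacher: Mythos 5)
Your proof is correct and takes essentially the same route as the paper's: the paper likewise picks a point $R\in\cS$ not on any $t_{p}$-secant through $P$ (equivalently, whose unique renitent line misses $P$, exactly your $R_0$), projects from it, discards the two lines $PR$ and the renitent line of $R$, and uses Proposition \ref{sok} on the remaining $q-1$ pencil lines, which are $m_{p}$-secants, to see at most one relevant point of $\cS$ on each. Your write-up merely makes explicit what the paper leaves terse, namely that the concurrency point $C(\ell)$ lies on $g_{R_0}$ and is therefore distinct from $P$.
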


\begin{proof}
Assume that the proper $t_{p}$-secants do not pass through a common point.
Let $P$ be a point not in $\cS$ and let $l_1, l_2, \dots, l_k$ denote the proper $t_{p}$-secants through $P$.
The proper $t_{p}$-secants are not concurrent, which yields that there is a point, say $R$, which is in $\cS$ but not on the lines $l_i$.
Hence the line $PR$ must be an $m_{p}$-secant. So the points of $\cS$ on the lines $l_i$ must lie on the $q-1$ lines $r_1, r_2, \ldots, r_{q-1}$ through $R$, which are different from $PR$ and from the unique $t_{p}$-secant through $R$. The line $PR$ is an $m_{p}$-secant and so by Proposition \ref{sok}, on each of the lines $r_1,r_2,\ldots, r_{q-1}$, we may see at most one point of $\cS \cap \{ l_1 \cup l_2 \ldots \cup l_k \}$ and hence the proposition follows.
\end{proof}

Then the next theorem follows immediately.

\begin{theorem}
For a mod $p$ generalized KM-arc $\cS$ of type $(0, m, t)_p$ in $\PG(2,q)$ 
assume $t \neq m$ and assume also that the proper $t_{p}$-secants are not concurrent.
Let $t'$ and $m'$ be the least number of  $\cS$ points on a proper $t_{p}$-secant and on a proper $m_{p}$-secant, respectively.
Then the number of proper $t_{p}$-secants through a point $ P \not \in \cS$ is at most $(q-1) / t'$. Hence the number of points on an $m_{p}$-secant is also at most  $(q-1) / t'$.
 \qed
\end{theorem}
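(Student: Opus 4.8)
The plan is to read off both statements directly from Lemma~\ref{q-1} and Proposition~\ref{sok}, using the standing hypothesis that the proper $t_{p}$-secants are not concurrent so that the second alternative of Lemma~\ref{q-1} is in force.

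First I would prove the bound on external points. Fix $P\notin \cS$ and let $l_1,\dots,l_k$ be the proper $t_{p}$-secants through $P$. Since $P\notin \cS$, any two of these lines meet only at $P$, so their sets of $\cS$-points are pairwise disjoint, and their union is exactly the set $\{Q\colon QP \text{ is a }t_{p}\text{-secant}\}\cap \cS$ occurring in Lemma~\ref{q-1}. By the definition of $t'$ each $l_i$ carries at least $t'$ points of $\cS$, so this union has size at least $kt'$. As the proper $t_{p}$-secants are assumed not concurrent, Lemma~\ref{q-1} bounds the same size by $q-1$, whence $kt'\le q-1$ and $k\le (q-1)/t'$.

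For the second statement I would transfer this bound to a proper $m_{p}$-secant $\ell$ through the concurrency supplied by Proposition~\ref{sok}: the renitent $t_{p}$-secants through the points of $\ell\cap \cS$ all pass through one common point $P$. The crux is to locate $P$. As soon as $|\ell\cap \cS|\ge 2$, I claim $P\notin \ell\cup \cS$. Indeed, if $P\in \ell$, then the renitent line of some $R\in \ell\cap \cS$ with $R\neq P$ would contain the two distinct points $R,P$ of $\ell$ and hence equal $\ell$, contradicting that $\ell$ is an $m_{p}$-secant with $t\neq m$; and if $P\in \cS$, then every such renitent line would coincide with the unique $t_{p}$-secant through $P$, forcing all of $\ell\cap \cS$ onto a single line distinct from $\ell$, so $|\ell\cap \cS|\le 1$. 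With $P\notin \ell\cup \cS$ secured, distinct points of $\ell\cap \cS$ give distinct renitent lines (two equal ones would again recover $\ell$), so $|\ell\cap \cS|$ is at most the number of proper $t_{p}$-secants through $P$, which by the first part is at most $(q-1)/t'$. The residual cases $|\ell\cap \cS|\le 1$ are trivial.

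Once Lemma~\ref{q-1} and Proposition~\ref{sok} are available the whole argument is really bookkeeping, so I do not anticipate a genuine difficulty. The only delicate point, and the one I would be most careful about, is pinning down the concurrency point $P$: ruling out $P\in \ell$ and $P\in \cS$ is precisely what permits applying the external-point estimate and converts a statement about renitent lines into the desired count of $\cS$-points on $\ell$.
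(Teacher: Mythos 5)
Your proposal is correct and follows exactly the route the paper intends: the theorem is stated with a \qed as an immediate consequence of Lemma~\ref{q-1} (which, under the non-concurrency hypothesis, bounds the $\cS$-points on the $t_{p}$-secants through an external point by $q-1$, giving $kt'\le q-1$) combined with Proposition~\ref{sok} to transfer the bound to $m_{p}$-secants. The only detail the paper leaves implicit --- locating the concurrency point outside $\cS$ (and, redundantly, off $\ell$) so that the external-point estimate applies --- is precisely what you verify, and your verification is sound.
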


\section{Characterization type results}

In this section, we will prove some characterization results on mod $p$ generalized KM-arcs of type $(0, m, t)_p$. In the special case of generalized KM-arcs, our result will be stronger. First recall some earlier stability results on $k$ mod $p$ sets. 

\begin{property}[{\cite[Property 3.5]{SzW}}]
\label{most}
Let $\cal M$ be a multiset in ${\rm PG}(2,q)$, $q=p^n$, where $p$ is prime. Assume that there are $\delta$ lines that intersect $\cal M$ in not $k$ mod $p$ points. 
We say that Property \ref{most} holds if for every point $Q$ incident with more than $q/2$ lines meeting $\cal M$ in not $k$ mod $p$ points, there exists a value $r\not\equiv k \pmod p$ such that more than $2\frac{\delta}{q+1}+5$ of the lines through $Q$ meet $\cal M$ in $r$ mod $p$ points. 
 \end{property}
 
\begin{result}[{\cite[Theorem 3.6]{SzW}}]
\label{general}
Let $\cal M$ be a multiset in ${\rm
PG}(2,q)$, $17< q$, $q=p^n$, where $p$ is prime.
Assume that the number of lines intersecting $\cal M$ in not
$k$ mod $p$ points is $\delta$, where $\delta <
(\lfloor \sqrt q \rfloor +1)(q+1-\lfloor \sqrt q \rfloor )$. 
Assume furthermore, that Property \ref{most} holds.
Then there
exists a multiset $\cal{M}'$ with the property that it intersects every
line in $k$ mod $p$ points and 
the number of points whose modulo $p$ multiplicity is different in
${\cal M}$ than in ${\cal M'}$ is exactly $\left\lceil \frac{\delta}{q+1}\right\rceil$.
\end{result}

\begin{corollary}
\label{spectrum}
Let $\cal M$ be a multiset in $\PG(2,q)$, $17< q$, $q=p^n$, where $p$ is prime. 
Assume that the number of lines intersecting $\cal M$ in not
$k$ mod $p$ points is $\delta < 4q-8$ and that Property \ref{most} holds.
Then Result \ref{general} can be applied and it yields
\[\delta\in \{0\} \cup \{q+1\}\cup \{2q,2q+1\} \cup \{3q-3,\ldots,3q+1\}.\]
\end{corollary}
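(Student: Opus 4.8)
The plan is to feed the hypotheses into Result \ref{general}, and then to read off $\delta$ from the combinatorics of the point set on which ${\cal M}$ deviates from the ``ideal'' multiset produced by that result. First I would verify that the hypotheses of Result \ref{general} are met; the only nontrivial one is $\delta < (\lfloor \sqrt q\rfloor + 1)(q + 1 - \lfloor \sqrt q\rfloor)$. Writing $s = \lfloor \sqrt q \rfloor$, we have $s \geq 4$ and $s \leq \sqrt q$ for $q > 17$, so
\[
(s+1)(q+1-s) \geq 5(q+1-\sqrt q) \geq 4q - 8 > \delta ,
\]
the middle inequality being equivalent to $q - 5\sqrt q + 13 \geq 0$, which holds because its discriminant in $\sqrt q$ is negative. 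Thus Result \ref{general} applies and yields a multiset ${\cal M}'$ meeting every line in $k$ mod $p$ points such that ${\cal M}$ and ${\cal M}'$ differ, modulo $p$, in exactly $d := \lceil \delta/(q+1)\rceil$ points.

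Next I would pass to the difference. Let $D$ be ${\cal M}-{\cal M}'$ reduced mod $p$; it is supported on exactly $d$ points, each carrying a nonzero multiplicity. Since ${\cal M}'$ meets every line in $k$ mod $p$ points, a line is bad (not $k$ mod $p$ for ${\cal M}$) precisely when it meets $D$ in a nonzero amount mod $p$, so $\delta$ is the number of such lines. The key elementary estimate is that every line through exactly one support point of $D$ is bad, and the number of such lines is at least $d(q-d+2)$: through each support point pass $q+1$ lines, and at most $d-1$ of them contain a second support point, leaving at least $q-d+2$ lines through that point and no other. Summing over the $d$ (disjoint families of) support points gives $\delta \geq d(q-d+2)$.

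From $d = \lceil \delta/(q+1)\rceil$ together with $\delta \le 4q-9$ one gets $d \leq 4$, since $(4q-9)/(q+1) < 4$ for every $q$. The lower bound then excludes $d=4$: it would force $\delta \geq 4(q-2) = 4q-8$, contradicting $\delta < 4q-8$. Hence $d \in \{0,1,2,3\}$, and the proof is reduced to these four cases.

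Finally, for each $d \leq 3$ I would determine $\delta$ exactly by separating the bad lines into those through a single support point and the few ``connecting'' lines through two or more support points, the latter being bad or not according to whether the corresponding sum of multiplicities is $\not\equiv 0 \pmod p$. This gives $\delta = 0$ for $d=0$, $\delta = q+1$ for $d=1$, $\delta \in \{2q,2q+1\}$ for $d=2$, and, splitting the $d=3$ case into the collinear configuration ($3q$ lines through a single point, plus the common line which may or may not be bad) and the triangle configuration ($3(q-1)$ such lines, plus at most three bad sides), $\delta \in \{3q-3,\ldots,3q+1\}$; taking the union gives exactly the stated spectrum. The only real bookkeeping is this $d=3$ case analysis, while the genuine content sits in the clean inequality $\delta \geq d(q-d+2)$, which simultaneously rules out $d=4$ and reproduces the extreme value $3q-3$.
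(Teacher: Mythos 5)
Your proposal is correct and is precisely the argument the paper leaves implicit: the corollary is stated without proof, the intended reasoning being to verify $\delta < 4q-8 < (\lfloor\sqrt q\rfloor+1)(q+1-\lfloor\sqrt q\rfloor)$ for $q>17$, apply Result \ref{general}, and enumerate the bad lines determined by the $d=\lceil\delta/(q+1)\rceil\leq 3$ modified points (single point; two points; three collinear points; triangle). Your bound $\delta \geq d(q-d+2)$, which rules out $d=4$ and yields the extreme value $3q-3$, together with the observation that a connecting line is bad exactly when the sum of its multiplicities is $\not\equiv 0 \pmod p$, fills in the details correctly and matches the spectrum stated in the paper.
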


\begin{result}[{\cite[Result 2.1,\, Remark 2.4,\, Lemma 2.5 (1)]{SzW}}]
\label{smallbigindex}
Let ${\cal M}$ be a multiset in $\PG(2,q)$, $17 < q$, so that the number of lines intersecting it in not $k$ mod $p$ points is 
$\delta$. Then
the number $s$ of not $k$ mod $p$ secants through any point of ${\cal M}$ satisfies $qs - s(s - 1) \leq \delta$.
\end{result}

\subsection{\texorpdfstring{When most of the lines are $m_{p}$-secants}{When most of the lines are mp-secants}}

In this section, we will consider mod p generalized KM-arcs of type $(0, m, t)_p$ in PG$(2,q)$. We will be able to characterize such an arc, when most of the lines intersect it in $m \pmod p$ points. 

\medskip
\noindent 
From now on, let $\cS$ be a mod p generalized KM-arc of type $(0, m, t)_p$ in $\PG(2,q)$ and assume that  $m \neq t$ and $\cS$ has no $0$-secants or $m=0$. So all $t_{p}$-secants are proper $t_{p}$-secants. Assume also that $q > 17$.

 Note that in this case, the lines that intersect $\cS$ in not $m$ mod $p$ points are exactly the $t_{p}$-secants; hence Property \ref{most} holds. The next lemma is an easy consequence of Proposition \ref{numtp} and Result \ref{smallbigindex}.

\begin{lemma}
\label{tpsecants}
The number of $t_{p}$-secants through a point is either at most $\lfloor \sqrt{q} \rfloor + 2$ or at least $q-\lfloor \sqrt{q} \rfloor -1$. \qed
\end{lemma}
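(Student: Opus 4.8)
The plan is to bound, for an arbitrary point $P$, the number $s$ of $t_{p}$-secants through $P$ by combining the global estimate of Proposition \ref{numtp} with the local estimate of Result \ref{smallbigindex}, and then to solve the resulting quadratic inequality.

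First I set $k=m$, so that the lines meeting $\cS$ in not $m \pmod p$ points are exactly the $t_{p}$-secants; let $\delta$ denote their number. Under the standing assumptions ($m\neq t$, and either no $0$-secants or $m=0$) every $t_{p}$-secant is proper, so Proposition \ref{numtp} gives $\delta \leq q\sqrt{q}+1$. Now I fix $P$. If $P\in\cS$ then $s=1$ and the bound is trivial, so I assume $P\notin\cS$; then every line through $P$ is an $m_{p}$- or a $t_{p}$-secant, and hence the $t_{p}$-secants through $P$ are precisely the not $m \pmod p$ secants through $P$. Applying Result \ref{smallbigindex} at $P$ therefore yields $qs-s(s-1)\leq \delta$, and together with the previous bound this gives
\[ s(q+1-s)\leq \delta \leq q\sqrt{q}+1. \]

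It then remains to read off the two ranges for $s$ from the equivalent inequality $s^2-(q+1)s+q\sqrt{q}+1\geq 0$. Writing $r=\lfloor \sqrt q\rfloor$ (so $r\geq 4$ whenever $q>17$), a short computation shows $(r+3)(q-r-2)>q\sqrt{q}+1$, i.e.\ $s=r+3$ already violates the inequality; hence the smaller root of the quadratic lies below $r+3$, and since $s$ is an integer this forces $s\leq r+2$. As the two roots sum to $q+1$, the larger root exceeds $q-r-2$, which forces the complementary range $s\geq q-r-1$. This is exactly the asserted dichotomy.

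The estimate in the last paragraph is routine, and it is where the hypothesis $q>17$ is used. The only step that needs genuine care is that Result \ref{smallbigindex} is being invoked at a point $P\notin\cS$: one must use the form of that result which bounds the number of not $k \pmod p$ secants through an \emph{arbitrary} point of the plane, not merely through a point of the multiset. This is what the cited Remark provides; I would state that step most explicitly, since a naive attempt to reduce to a point of the multiset by adding $P$ with a nonzero multiplicity inflates $\delta$ (by changing the type of the $m_{p}$-secants through $P$) and yields a strictly weaker bound that need not give the constant $\lfloor\sqrt q\rfloor+2$.
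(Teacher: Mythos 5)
Your proof is correct and takes essentially the same route as the paper, which gives no details beyond declaring the lemma ``an easy consequence of Proposition \ref{numtp} and Result \ref{smallbigindex}'': your chain $s(q+1-s)=qs-s(s-1)\leq \delta \leq q\sqrt{q}+1$, followed by checking $\bigl(\lfloor\sqrt{q}\rfloor+3\bigr)\bigl(q-\lfloor\sqrt{q}\rfloor-2\bigr)>q\sqrt{q}+1$ for $q>17$ and using concavity/symmetry of $s\mapsto s(q+1-s)$, is exactly the omitted computation. Your closing caution about invoking Result \ref{smallbigindex} at $P\notin\cS$ is well taken, though note that adding $P$ with multiplicity $p$ (positive but $0 \bmod p$) reduces to a point of the multiset without changing any line's intersection number mod $p$ or the value of $\delta$, so the reduction you call naive fails only if one insists on a multiplicity that is nonzero mod $p$.
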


\begin{lemma}
\label{largeindexpointexists}
There is always at least one point (not in $\cS$), through which there pass at least $q-\lfloor \sqrt{q} \rfloor -1$ $t_{p}$-secants.
\end{lemma}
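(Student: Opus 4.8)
The plan is to \emph{produce} the high-index point directly from the stability result of Sz\H onyi and Weiner (Result \ref{general}), applied to $\cS$ viewed as a multiset ${\cal M}$ with $k=m$. Under the standing assumptions of this subsection the lines meeting $\cS$ in not $m \pmod p$ points are exactly the proper $t_{p}$-secants, so $\delta$ equals the number $\tau$ of $t_{p}$-secants and Property \ref{most} holds (as already observed before Lemma \ref{tpsecants}). Result \ref{general} then yields a multiset ${\cal M}'$ meeting every line in $m \pmod p$ points and differing from $\cS$ in mod $p$ multiplicity in exactly $N=\lceil \tau/(q+1)\rceil$ points; these \emph{exceptional} points are the candidates for our high-index point.

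First I would check $N\geq 1$. By Lemma \ref{1modpthroughP} every point off $\cS$ lies on at least one $t_{p}$-secant, and every point of $\cS$ on exactly one, so the $t_{p}$-secants form a blocking set of the dual plane and $\tau\geq q+1$; hence $N\geq 1$ and an exceptional point $P$ exists. Next, on any line $\ell$ through $P$ that avoids the remaining $N-1$ exceptional points, the mod $p$ multiplicities of $\cS$ and ${\cal M}'$ differ only at $P$, so $\ell$ meets $\cS$ in not $m \pmod p$ points, i.e.\ $\ell$ is a $t_{p}$-secant. There are at least $q+1-(N-1)=q+2-N$ such lines. Finally Proposition \ref{numtp} gives $\tau\leq q\sqrt q+1$, whence $N=\lceil \tau/(q+1)\rceil\leq \lfloor\sqrt q\rfloor+1$, and therefore $P$ lies on at least $q+2-N\geq q-\lfloor\sqrt q\rfloor-1$ $t_{p}$-secants. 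Since a point of $\cS$ lies on a unique $t_{p}$-secant and $q-\lfloor\sqrt q\rfloor-1>1$ for $q>17$, the point $P$ is not in $\cS$, as required.

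The hard part is verifying the hypothesis of Result \ref{general}, namely $\delta=\tau<(\lfloor\sqrt q\rfloor+1)(q+1-\lfloor\sqrt q\rfloor)$. The Bruen--Thas bound of Proposition \ref{numtp} gives $\tau\leq q\sqrt q+1$, which coincides with $(\lfloor\sqrt q\rfloor+1)(q+1-\lfloor\sqrt q\rfloor)$ when $q$ is a square and may slightly exceed it otherwise; thus the reconstruction applies in all but an extremal regime where $\tau$ is close to $q\sqrt q+1$. That regime must be treated separately: there the dual blocking set formed by the $t_{p}$-secants is of (almost) maximal essential size, so its near-Hermitian structure together with the congruence $t_{p}(Q)\equiv 1 \pmod p$ from Lemma \ref{1modpthroughP} should either exhibit a point on many $t_{p}$-secants or exclude the configuration. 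An alternative, more elementary reduction uses Proposition \ref{sok}: if no high-index point existed then every proper $m_{p}$-secant would carry at most $\lfloor\sqrt q\rfloor+2$ points of $\cS$; however the double count $q|\cS|=\sum_{\ell}|\ell\cap\cS|$ over proper $m_{p}$-secants, combined with Proposition \ref{numtp}, does not by itself close the argument, which is precisely why the stability result is needed.
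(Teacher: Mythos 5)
Your first case is sound and coincides with the paper's opening step: when $\delta<(\lfloor \sqrt{q} \rfloor +1)(q+1-\lfloor \sqrt{q} \rfloor)$, Result \ref{general} produces the exceptional set $\cP$, each of whose points lies on at least $q+1-(|\cP|-1)$ $t_{p}$-secants (your $q+2-N$), and such a point cannot be in $\cS$ since points of $\cS$ lie on a unique $t_{p}$-secant. (Minor remark: inside this case you do not need Proposition \ref{numtp} to bound $N$; the hypothesis of Result \ref{general} itself gives $N=\lceil \delta/(q+1)\rceil\leq \lfloor \sqrt{q}\rfloor +1$.)

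The genuine gap is the complementary regime $\delta\geq(\lfloor \sqrt{q} \rfloor +1)(q+1-\lfloor \sqrt{q} \rfloor)$, which you correctly identify as unavoidable --- for $q$ a square this threshold equals the Bruen--Thas bound $q\sqrt{q}+1$ of Proposition \ref{numtp}, so it cannot be excluded a priori --- but then only gesture at (``should either exhibit a point \dots or exclude the configuration''). The paper resolves it by a concrete elimination, not a structural classification of large dual blocking sets. Since the $t_{p}$-secants partition $\cS$ and are all proper, $|\cS|\geq \delta \geq (\lfloor \sqrt{q} \rfloor +1)(q+1-\lfloor \sqrt{q} \rfloor)$. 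Assuming no point lies on at least $q-\lfloor \sqrt{q} \rfloor -1$ $t_{p}$-secants, Proposition \ref{sok} (the $t_{p}$-secants meeting a fixed $m_{p}$-secant are concurrent) combined with the spectrum gap of Lemma \ref{tpsecants} caps every $m_{p}$-secant at $\lfloor \sqrt{q} \rfloor +2$ points of $\cS$, whence $|\cS|\leq q(\lfloor \sqrt{q} \rfloor +1)+t_{\min}$; comparing with the lower bound forces $t_{\min}=1$, hence $t=1$. Next, an $m_{p}$-secant with $\lfloor \sqrt{q} \rfloor +2$ points of $\cS$ is excluded: its concurrency point would carry exactly $\lfloor \sqrt{q} \rfloor +2$ $t_{p}$-secants, and Lemma \ref{1modpthroughP} gives $\lfloor \sqrt{q} \rfloor +2\equiv 1 \pmod p$, forcing $m=1=t$, a contradiction. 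Finally, with all $m_{p}$-secants carrying at most $\lfloor \sqrt{q} \rfloor +1$ points and some line achieving this, the concurrency point $M$ of its $t_{p}$-secants carries exactly $\lfloor \sqrt{q} \rfloor +2$ of them (again by the $1 \bmod p$ congruence), and counting the $t_{p}$-secants off $M$ along the remaining lines through $M$ bounds their total number by $\lfloor \sqrt{q} \rfloor + 2 + (q-1-\lfloor \sqrt{q} \rfloor)(\lfloor \sqrt{q} \rfloor +1)$, which is strictly below $\delta$ --- the final contradiction. So your instinct that a single double count over $m_{p}$-secants ``does not by itself close the argument'' is right, but the missing ingredient is not the stability theorem (which is inapplicable here); it is this finer case analysis driven by the congruence of Lemma \ref{1modpthroughP} and the dichotomy of Lemma \ref{tpsecants}, and without it your proof establishes the lemma only under the extra hypothesis $\delta<(\lfloor \sqrt{q} \rfloor +1)(q+1-\lfloor \sqrt{q} \rfloor)$.
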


\begin{proof}
First suppose that the number of $t_{p}$-secants, $\delta$, is less than $(\lfloor \sqrt q \rfloor +1)(q+1-\lfloor \sqrt q \rfloor )$. Then by Result \ref{general}, there is a 
point set $\cP$ of size $\left\lceil \frac{\delta}{q+1}\right\rceil < \sqrt q+1$ such that adding the points of $\cP$ with the right non zero modulo $p$ multiplicities we obtain a multiset $\cS'$ meeting every line in $m$ mod $p$ points.
This means that through a point $P \in {\cal P}$ there pass at
most $|{\cal P}|-1$ $m_{p}$-secants and hence at 
least $q+1 - (|{\cal P}|-1)$ $t_{p}$-secants. 
Since $|{\cal P}| < \sqrt q +1$,  $P$ is a point incident with lots of $t_{p}$-secants. Hence the points of $\cal P$ are not in $\cS$.

Next assume that the number of $t_{p}$-secants is at least $(\lfloor \sqrt q \rfloor +1)(q+1-\lfloor \sqrt q \rfloor )$. 
The $t_{p}$-secants partition the points of $\cS$ and each of them contains at least one point of $\cS$, thus
\[|S| \geq (\lfloor \sqrt q \rfloor +1)(q+1-\lfloor \sqrt q \rfloor ).\]

On the contrary, assume that there is no point with at least  $q-\lfloor \sqrt{q} \rfloor -1$ $t_{p}$-secants on it.
It follows from Proposition \ref{sok} and Lemma \ref{tpsecants}, that each $m_{p}$-secant contains at most $\lfloor \sqrt{q} \rfloor + 2$ points from $\cS$.
So $|\cS| \leq q(\lfloor \sqrt{q} \rfloor + 1 ) + t_{min}$, where $t_{min}$ is the least number of points from $\cS$ on a $t_{p}$-secant. If $t_{min} > 1$, then the number of $t_{p}$-secants is at most $q( \lfloor \sqrt{q} \rfloor + 1 ) / 2 + 1$ and we have a contradiction. So $t_{min} = 1$ and 
\[t=1.\]
If the $m_{p}$-secants contain at most $\lfloor \sqrt{q} \rfloor$ points, then $|\cS| \leq q\lfloor \sqrt{q} \rfloor - q +1$ and again we have a contradiction. If there is an $m_{p}$-secant  $e$ with $\lfloor \sqrt{q} \rfloor + 2$ points, then by Proposition \ref{sok}, there is a point $N$ incident with at least $\lfloor \sqrt{q} \rfloor + 2$ $t_{p}$-secants. By Lemma \ref{tpsecants} and by the assumption that there is no point with at least  $q-\lfloor \sqrt{q} \rfloor -1$ $t_{p}$-secants on it, the number of $t_{p}$-secants through $N$ must be exactly $\lfloor \sqrt{q} \rfloor + 2$. By Lemma \ref{1modpthroughP}, $\lfloor \sqrt{q} \rfloor + 2 \equiv 1 \pmod p$ and so $m=1$. This contradicts the assumption that $m \neq t$, since now 
$t = 1$ too.

Hence all $m_{p}$-secants contain at most $\lfloor \sqrt{q} \rfloor + 1$ points from $\cS$ and there exists a line $\ell$ with exactly $\lfloor \sqrt{q} \rfloor + 1$ points from $\cS$. Let $M$ be the point through which the $t_{p}$-secants of $\ell$ pass. The number of $t_{p}$-secants through a point is congruent to $1=t \neq m$ mod $p$, hence through $M$ there pass exactly $\lfloor \sqrt{q} \rfloor + 2$ $t_{p}$-secants. On the rest of the $q-1 - \lfloor \sqrt{q} \rfloor$ not $t_{p}$-secants through $M$, we see at most $(q-1 - \lfloor \sqrt{q} \rfloor)(\lfloor \sqrt{q} \rfloor + 1)$ points of $\cS$, so there are at most this many $t_{p}$-secants not incident with $M$. Hence the total number of $t_{p}$-secants is at most $\lfloor \sqrt{q} \rfloor + 2 + (q-1 - \lfloor \sqrt{q} \rfloor)(\lfloor \sqrt{q} \rfloor + 1)$, which is again a contradiction.
\end{proof}

\begin{lemma}
\label{numofnonmp}
The number of $t_{p}$-secants is at most $2q + 1 + (\lfloor \sqrt{q} \rfloor + 2)^2$.
\end{lemma}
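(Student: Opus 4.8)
The plan is to pin almost all $t_p$-secants onto a single point, push every remaining $t_p$-secant onto one of only $\lfloor\sqrt q\rfloor+2$ auxiliary points, and then count. First I would dispose of the case in which the proper $t_p$-secants are concurrent: then there are at most $q+1$ of them and the bound is immediate. So assume they are not concurrent, so that Lemma \ref{q-1} applies. By Lemma \ref{largeindexpointexists} fix a point $M\notin\cS$ lying on at least $q-\lfloor\sqrt q\rfloor-1$ $t_p$-secants; since every line through $M$ is a $t_p$- or an $m_p$-secant, $M$ carries at most $N:=\lfloor\sqrt q\rfloor+2$ $m_p$-secants. Call a point \emph{heavy} if it lies on at least $q-\lfloor\sqrt q\rfloor-1$ $t_p$-secants. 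By Lemma \ref{tpsecants} every point is heavy or carries at most $\lfloor\sqrt q\rfloor+2$ $t_p$-secants, and by Lemma \ref{q-1} a heavy point lies on at most $q-1$ $t_p$-secants (each carries a point of $\cS$, and these points are distinct).

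The key reduction is that the $t_p$-secants are covered by few points. If $\ell$ is a $t_p$-secant not through $M$ and $R\in\ell\cap\cS$, then $MR\ne\ell$ is the $m_p$-secant through $M$ carrying $R$; applying Proposition \ref{sok} to this $m_p$-secant $e$, the line $\ell$ passes through the common point $P_e$ of the $t_p$-secants meeting $e\cap\cS$. Hence every $t_p$-secant passes through the set $C:=\{M\}\cup\{P_e\}$, whose size is at most $N+1\le\lfloor\sqrt q\rfloor+3$. Moreover every heavy point lies in $C$: a heavy point $B\notin C$ would have each of its $t_p$-secants meeting $C$ in a point other than $B$, hence being a line $Bc$ with $c\in C$, giving at most $|C|$ of them, contradicting heaviness for $q$ large. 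Consequently the number of $t_p$-secants is at most $\sum_{c\in C}t_p(c)$, where (using the notation of Lemma \ref{1modpthroughP}) a small $c$ contributes $t_p(c)\le\lfloor\sqrt q\rfloor+2$ and a heavy $c$ contributes $t_p(c)\le q-1$.

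It therefore suffices to show that at most two points of $C$ are heavy. Granting this, the at most two heavy points contribute at most $2(q-1)$, while the remaining at most $|C|-2\le\lfloor\sqrt q\rfloor+1$ small points contribute at most $(\lfloor\sqrt q\rfloor+1)(\lfloor\sqrt q\rfloor+2)$; the total is at most $2q+\lfloor\sqrt q\rfloor^2+3\lfloor\sqrt q\rfloor$, which lies below $2q+1+(\lfloor\sqrt q\rfloor+2)^2$ by $\lfloor\sqrt q\rfloor+5$, so there is room to spare. To bound the number of heavy points I would use a mutual-confinement argument: if $B,B'$ are heavy and $BB'$ is an $m_p$-secant, then no $t_p$-secant through $B$ passes through $B'$, so every $\cS$-point lying on a $t_p$-secant through $B$ is joined to $B'$ by an $m_p$-secant. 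Since $B'$ has at most $\lfloor\sqrt q\rfloor+2$ $m_p$-secants and, by Proposition \ref{sok}, the $t_p$-secants meeting one of them are concurrent, pigeonholing the $\ge q-\lfloor\sqrt q\rfloor-1$ such points forces some $m_p$-secant of $B'$ to have concurrency point $B$; a third heavy point would demand two such confinements at once, overloading one pencil of $m_p$-secants.

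The hard part will be precisely this last step, and in particular its degenerate cases. The clean counting collapses when the relevant points become collinear---when a candidate third heavy point lies on $BB'$, or when the $m_p$-secant joining two heavy points already carries almost a full line of $\cS$-points, so that the ``distinct lines to the third point'' estimate fails. Ruling these out appears to need input beyond pure incidence counting: for instance, a near-full $m_p$-secant should be excluded using the congruence $t_p(Q)\equiv1\pmod p$ from Lemma \ref{1modpthroughP} together with the global ceiling of Proposition \ref{numtp}. I expect the genuine content of the lemma to reside in handling exactly these collinear, nearly-saturated configurations.
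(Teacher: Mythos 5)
Your setup is sound and largely parallels the paper's proof: fixing a heavy point via Lemma \ref{largeindexpointexists}, noting it carries at most $\lfloor\sqrt q\rfloor+2$ $m_p$-secants, and combining Proposition \ref{sok}, Lemma \ref{tpsecants} and Lemma \ref{q-1}; your one-heavy-point count $(q-1)+(\lfloor\sqrt q\rfloor+2)^2$ is exactly the paper's first case. The genuine gap is the step you yourself flag: the claim that at most two points of $C$ are heavy. You never prove it, and your counting scheme truly needs it, since with three heavy points your sum $\sum_{c\in C}t_p(c)$ can reach about $3(q-1)+\lfloor\sqrt q\rfloor(\lfloor\sqrt q\rfloor+2)\approx 4q$, which overshoots the stated bound. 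Worse, the claim cannot be established by the incidence and pigeonhole tools you invoke. Consider three non-collinear points $N_1,N_2,N_3$ with $\cS$ contained in the sides of the triangle and, through each vertex, about $q-1$ $t_p$-secants, each vertex being (via Proposition \ref{sok}) the concurrency point attached to the opposite side. Here your ``mutual-confinement'' pigeonhole produces no contradiction: each heavy vertex is confined by exactly one $m_p$-secant of each other heavy point, namely the opposite side, and no pencil is overloaded. This is precisely the configuration the paper confronts later, in the proof of Theorem \ref{mainm0}, where it is eliminated only by a mod $p$ congruence ($t\equiv 1$, then $m\equiv 1 \pmod p$, contradicting $t\neq m$) after applying the stability machinery of Result \ref{general} --- which requires the bound of the present lemma as input. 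So completing your plan with those tools would be circular, and without them the degenerate cases you concede in your final paragraph remain open.

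The repair is the paper's second case, which renders the ``at most two heavy points'' question irrelevant. If there are at least two heavy points $M$ and $N$, then every $t_p$-secant through $M$ or $N$ is among the at most $2q+1$ lines of the two pencils; and every $t_p$-secant $\ell$ missing both contains a point $P\in\cS$ whose unique $t_p$-secant is $\ell$ itself, so that $PM$ and $PN$ are $m_p$-secants, placing $P$ in the intersection grid of the at most $\lfloor\sqrt q\rfloor+2$ $m_p$-secants through $M$ with the at most $\lfloor\sqrt q\rfloor+2$ through $N$. Since the $t_p$-secants partition $\cS$, distinct such $\ell$ yield distinct points $P$, so there are at most $(\lfloor\sqrt q\rfloor+2)^2$ skew $t_p$-secants, giving the total $2q+1+(\lfloor\sqrt q\rfloor+2)^2$ with no control on the number of heavy points needed. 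In short: your covering set $C$ and the refined per-point counts are an unnecessary detour whose missing ingredient is at least as hard as a later theorem; two heavy points already corral everything.
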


\begin{proof}
By Lemma \ref{largeindexpointexists}, there exists a 
point $M$ with at least $q-\lfloor \sqrt{q} \rfloor -1$ $t_{p}$-secants through it. 

First suppose that there are no more points incident with at least $q-\lfloor \sqrt{q} \rfloor -1$ $t_{p}$-secants. Let us count the number of points of $\cal S$ on the lines through $M$. On each of the $m_{p}$-secants through $M$, we see at most $\lfloor \sqrt{q} \rfloor + 2$ points by Proposition \ref{sok} and Lemma \ref{tpsecants}. And so by Lemma \ref{q-1}, in total $\cS$ has at most $(q-1) + (\lfloor \sqrt{q} \rfloor + 2)^2$ points. This is also an upper bound on the number of $t_{p}$-secants of $\cS$; hence we are done.

Now assume that there is another point, say $N$, with at least $q-\lfloor \sqrt{q} \rfloor -1$ $t_{p}$-secants through it. For the points in $\cS$, the unique $t_{p}$-secant through them pass either through $M$ or $N$ or it is skew to these two points. There are at most $(\lfloor \sqrt{q} \rfloor + 2)^2$ points $P$, so that neither $PM$ nor $PN$ is a $t_{p}$-secant. So the number of $t_{p}$-secants not through $M$ or $N$ is also at most this many. Hence the total number of $t_{p}$-secants is at most 
$2q + 1 + (\lfloor \sqrt{q} \rfloor + 2)^2 $. 
\end{proof}

The next proposition follows from Result \ref{general}, from Corollary \ref{spectrum} and from Lemma \ref{numofnonmp}.

\begin{proposition}
\label{extendable}
There exists a point set $\cal N$ of size at most $3$, so that if we add the points from $\cal N$ with multiplicity $m-t$ to $\cS$, we obtain a multiset intersecting each line in $m$ mod $p$ points. Consequently, the following properties hold for $\cal N$.
\begin{enumerate}[\rm(1)]
    \item a line contains $1$ mod $p$ point from $\cal N$ if and only if it is a $t_{p}$-secant,
    \item through a point in $\cal N$ there pass at least $q - 1$ $t_{p}$-secants of $\cS$,
    \item through a point not in $\cal N$ there pass at most $3$ $t_{p}$-secants.
\end{enumerate}
\qed
\end{proposition}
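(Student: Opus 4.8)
The plan is to chain the three cited results and then read off the listed properties by an elementary incidence count. First I would feed the bound of Lemma \ref{numofnonmp}, namely that the number $\delta$ of $t_{p}$-secants satisfies $\delta \le 2q+1+(\lfloor\sqrt q\rfloor+2)^2$, into Corollary \ref{spectrum}; this requires checking $\delta<4q-8$, and Property \ref{most} holds as already observed (the lines meeting $\cS$ in not $m\bmod p$ points are precisely the $t_{p}$-secants). Corollary \ref{spectrum} then confines $\delta$ to the spectrum $\{0\}\cup\{q+1\}\cup\{2q,2q+1\}\cup\{3q-3,\ldots,3q+1\}$, so in every case $\delta\le 3q+1$ and $\lceil \delta/(q+1)\rceil\le 3$; moreover each of these values is far below $(\lfloor\sqrt q\rfloor+1)(q+1-\lfloor\sqrt q\rfloor)$, so Result \ref{general} applies with $k=m$ and yields a multiset $\cS'$ meeting every line in $m\bmod p$ points and differing from $\cS$, in mod $p$ multiplicity, on a set $\cal N$ of exactly $\lceil \delta/(q+1)\rceil\le 3$ points.

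Next I would pin down the multiplicities on $\cal N$. Under the standing hypothesis (no $0$-secants, or $m=0$) every line of $\PG(2,q)$ is a $t_{p}$-secant or an $m_{p}$-secant of $\cS$, since a line meeting $\cS$ passes through some $R\in\cS$ and is therefore either the unique $t_{p}$-secant through $R$ or one of the $m_{p}$-secants through $R$. Writing $\mu_N\not\equiv 0\pmod p$ for the mod $p$ multiplicity difference at $N\in\cal N$, the identity $|\ell\cap\cS'|\equiv|\ell\cap\cS|+\sum_{N\in\cal N\cap\ell}\mu_N\pmod p$ combined with $|\ell\cap\cS'|\equiv m$ shows that $\sum_{N\in\cal N\cap\ell}\mu_N$ is $\equiv 0$ when $\ell$ is an $m_{p}$-secant and $\equiv m-t$ when $\ell$ is a $t_{p}$-secant. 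Fixing $N\in\cal N$ and choosing any of the at least $q+1-(|\cal N|-1)\ge q-1$ lines through $N$ that avoid the other points of $\cal N$ (these exist since $q+1>|\cal N|$), the left-hand sum reduces to $\mu_N\not\equiv 0$, which forces $\mu_N\equiv m-t$ and shows all such lines are $t_{p}$-secants. In particular $\cal N\cap\cS=\emptyset$, since a point of $\cS$ lies on a unique $t_{p}$-secant while each $N$ lies on at least $q-1\ge 2$; hence $\cS'$ is exactly $\cS$ with the points of $\cal N$ added at multiplicity $m-t$.

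Finally the three properties drop out. Property $(2)$ is precisely what was just shown. For $(3)$, every $t_{p}$-secant $\ell$ satisfies $\sum_{N\in\cal N\cap\ell}(m-t)\not\equiv 0$, hence meets $\cal N$; as two distinct lines through a point $P\notin\cal N$ meet only at $P$, the $t_{p}$-secants through $P$ hit distinct points of $\cal N$ and so number at most $|\cal N|\le 3$. For $(1)$, using $\mu_N\equiv m-t$ for all $N$ we have $\sum_{N\in\cal N\cap\ell}\mu_N\equiv |\cal N\cap\ell|(m-t)$, which equals $m-t$ (i.e.\ $\ell$ is a $t_{p}$-secant) exactly when $(|\cal N\cap\ell|-1)(m-t)\equiv 0$, that is, when $|\cal N\cap\ell|\equiv 1\pmod p$.

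I expect the main obstacle to lie in the numerical bookkeeping of the first paragraph rather than in the combinatorics: one must verify that the bound of Lemma \ref{numofnonmp} genuinely stays below the thresholds of Corollary \ref{spectrum} and Result \ref{general} across the whole admissible range $q>17$. Since $(\lfloor\sqrt q\rfloor+2)^2$ is of size roughly $q$, the margin against $4q-8$ is slim for the smallest prime powers, so a handful of small values of $q$ may have to be treated separately (or the hypothesis sharpened); once $\cal N$ and the multiplicity $m-t$ are in hand, the derivation of $(1)$–$(3)$ is routine.
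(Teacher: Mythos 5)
Your proposal is correct and is essentially the paper's own proof: the paper derives Proposition \ref{extendable} exactly by chaining Lemma \ref{numofnonmp}, Corollary \ref{spectrum} and Result \ref{general} (with Property \ref{most} holding because the lines not meeting $\cS$ in $m$ mod $p$ points are precisely the $t_{p}$-secants), and your second and third paragraphs correctly supply the multiplicity computation ($\mu_N\equiv m-t$, ${\cal N}\cap\cS=\emptyset$) and the derivation of properties (1)--(3) that the paper leaves implicit. Your closing caveat is moreover well founded and in fact applies to the paper's own one-line proof: the inequality $2q+1+(\lfloor\sqrt q\rfloor+2)^2<4q-8$ fails precisely for $q\in\{19,25,27,29\}$, so Corollary \ref{spectrum} cannot be invoked verbatim at those values. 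The repair is short: Result \ref{general} itself applies for every $q>17$ (its threshold $(\lfloor\sqrt q\rfloor+1)(q+1-\lfloor\sqrt q\rfloor)$ exceeds the bound of Lemma \ref{numofnonmp}), yielding $|{\cal N}|=\lceil\delta/(q+1)\rceil\le 4$; if $|{\cal N}|=4$, then on the one hand $\delta\ge 4(q+1)-12=4q-8$, since each point of $\cal N$ lies on at least $q-2$ lines avoiding the other three and your mod $p$ count shows all of these are $t_{p}$-secants, while on the other hand each point of $\cal N$ is then incident with at most $3$ lines that are not $t_{p}$-secants, so rerunning the two-point argument from the proof of Lemma \ref{numofnonmp} with two points of $\cal N$ in the roles of $M$ and $N$ gives $\delta\le 2q+1+3\cdot 3=2q+10$, a contradiction for $q>17$; hence $|{\cal N}|\le 3$ in all cases and the rest of your argument goes through unchanged.
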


\begin{theorem}
\label{mainm0}	Let $\cS$ be a mod p generalized KM-arc of type $(0, m, t)_p$ in $\PG(2,q)$, $q > 17$. Assume that $t\neq m$. If there are no $0$-secants of $\cS$ or $m=0$, then the $t_{p}$-secants are concurrent.
\end{theorem}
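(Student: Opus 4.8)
The plan is to use the structural result of Proposition~\ref{extendable}, which already tells us that there is a point set $\cal N$ of size at most $3$ such that adding its points with multiplicity $m-t$ produces a multiset meeting every line in $m \pmod p$ points. The key observation is that the $t_{p}$-secants of $\cS$ are precisely the lines containing $1 \pmod p$ points of $\cal N$ (part~(1) of the proposition). So the whole problem reduces to understanding the possible configurations of $\cal N$, which has only $1$, $2$, or $3$ points, and showing that in all admissible cases the $t_{p}$-secants share a common point.

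First I would dispose of the case $|{\cal N}|=1$: then every $t_{p}$-secant passes through the unique point of $\cal N$, and concurrency is immediate. The case $|{\cal N}|\le 1$ being settled, I would then argue that $|{\cal N}|=2$ and $|{\cal N}|=3$ cannot actually occur, or if they formally can, they still force concurrency. The main tool here is the Lemma of Renitent lines (Lemma~\ref{renitent}), repackaged through Proposition~\ref{sok}: for any fixed $m_{p}$-secant $\ell$, the $t_{p}$-secants meeting $\ell$ in $\cS$ are concurrent. Combined with part~(2) of Proposition~\ref{extendable} — through each point of $\cal N$ there pass at least $q-1$ $t_{p}$-secants — this severely constrains how two or three such high-index points can coexist.

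The heart of the argument is therefore to derive a contradiction (or forced degeneracy) from the assumption $|{\cal N}|\ge 2$. Suppose $N_1, N_2 \in {\cal N}$ are distinct. Each is incident with at least $q-1$ $t_{p}$-secants of $\cS$, so each is incident with at most two $m_{p}$-secants. Take the line $N_1N_2$; depending on whether it is an $m_{p}$- or $t_{p}$-secant I would count points of $\cS$ on the lines through $N_1$ and through $N_2$. Using Proposition~\ref{sok} applied to a common $m_{p}$-secant, together with the divisibility constraint from Lemma~\ref{1modpthroughP} (the number of $t_{p}$-secants through a point off $\cS$ is $\equiv 1 \pmod p$), I expect to show that the two pencils of $q-1$ $t_{p}$-secants overlap in a way that is only consistent if $N_1=N_2$. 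A clean way to phrase this: almost all $t_{p}$-secants through $N_1$ would also have to be counted through $N_2$, but a line through two distinct points is a single line, so the two pencils can share at most one common line, contradicting that both contain $\ge q-1$ secants while the total number of $t_{p}$-secants is bounded by Lemma~\ref{numofnonmp}.

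The step I expect to be the main obstacle is ruling out the borderline configuration with $|{\cal N}|=2$ or $3$ cleanly, because the stability machinery only guarantees a small exceptional set and does not a priori say that set is a single point. The delicate part is squeezing the numerics: one must show that two points each carrying $\ge q-1$ $t_{p}$-secants would force more than $2q+1+(\lfloor\sqrt q\rfloor+2)^2$ $t_{p}$-secants in total, contradicting Lemma~\ref{numofnonmp}, and here the exact thresholds and the hypothesis $q>17$ matter. Once $|{\cal N}|=1$ is forced, concurrency of all $t_{p}$-secants through the single point of $\cal N$ follows at once from part~(1) of Proposition~\ref{extendable}.
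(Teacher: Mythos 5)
Your reduction to the exceptional set ${\cal N}$ via Proposition~\ref{extendable} and your treatment of $|{\cal N}|=1$ coincide with the paper, but the mechanism you propose for $|{\cal N}|\geq 2$ fails, and this is a genuine gap. The numeric contradiction does not exist: the pencils of $t_{p}$-secants through two distinct points $N_1,N_2\in{\cal N}$ are in fact disjoint (the line $N_1N_2$ carries two points of ${\cal N}$, so by Proposition~\ref{extendable}~(1) it is never a $t_{p}$-secant), giving at least $2(q-1)$ $t_{p}$-secants in total --- comfortably below the bound $2q+1+(\lfloor\sqrt q\rfloor+2)^2$ of Lemma~\ref{numofnonmp}. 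Even $|{\cal N}|=3$ yields only at least $3q-3$ such lines against a bound of roughly $3q+4\sqrt q+5$, so no contradiction arises for any admissible size of ${\cal N}$. This is no accident: the proof of Lemma~\ref{numofnonmp} was written precisely to accommodate two points each carrying about $q$ $t_{p}$-secants, so that lemma can never be used to exclude them. Moreover, $|{\cal N}|=2$ should not be expected to be impossible outright; the correct conclusion in that case is a degenerate configuration in which concurrency holds trivially.

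The missing idea is to play ${\cal N}$ against the points of $\cS$ rather than counting lines globally, exploiting that each point of $\cS$ lies on \emph{exactly one} $t_{p}$-secant. If $N_1\neq N_2$ are in ${\cal N}$ and $P\in\cS$ lies off the line $N_1N_2$, then $PN_1$ and $PN_2$ each contain exactly one point of ${\cal N}$, so by Proposition~\ref{extendable}~(1) they are two distinct $t_{p}$-secants through $P$ --- impossible. Hence when ${\cal N}$ is collinear with $|{\cal N}|>1$, the set $\cS$ is contained in the line spanned by ${\cal N}$, forcing $m=1$ and a unique $t_{p}$-secant, and concurrency is automatic (this case is settled by degeneracy, not by contradiction). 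When ${\cal N}=\{N_1,N_2,N_3\}$ is a triangle, the same observation confines $\cS$ to the three sides; then every $t_{p}$-secant meets $\cS$ in exactly one point, so $t\equiv 1\pmod p$, and the number of $t_{p}$-secants through $N_1$ equals $|\cS\cap N_2N_3|\equiv m\pmod p$ since $N_2N_3$ is an $m_{p}$-secant, while Lemma~\ref{1modpthroughP} forces that same count to be $\equiv 1\pmod p$; hence $m\equiv 1\equiv t\pmod p$, contradicting $t\neq m$. None of this is recoverable from your pencil-overlap count, so as written your proposal does not close the cases $|{\cal N}|\in\{2,3\}$.
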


\begin{proof}
Consider the point set $\cal N$ from Proposition \ref{extendable}. 

If $|{\cal N}| = 1$, then Proposition \ref{extendable} $(1)$ finishes the proof.

Assume that the points of $\cal N$ lie on a line $\ell$
and $|{\cal N}| > 1$. If there was a point of $\cS$ outside $\ell$, then by Proposition \ref{extendable} $(1)$ through this point there would pass at least two $t_{p}$-secants; a contradiction. Hence $\cS \subset \ell$, $m=1$ and $\ell$ is the only $t_{p}$-secant; again we are done. 

So we may assume that ${\cal N} = \{ N_1, N_2, N_3 \}$. 
From above, the points of ${\cal N}$ form a triangle. Let $P$ be a point in $\cS$ and not on the lines $N_iN_j$. Then by Proposition \ref{extendable}, $PN_1$, $PN_2$ and $PN_3$
are $t_{p}$-secants, so there are at least three $t_{p}$-secants through $P$; a contradiction. Hence the points of $\cS$ lie on the lines $N_1N_2$, $N_2N_3$ and $N_1N_3$. Each of the $t_{p}$-secants contains exactly 1 point from $\cS$, so $t \equiv 1 \pmod p$. Also, again by Proposition \ref{extendable} and by the current setting the number of $t_{p}$-secants through $N_1$ is $|\cS \cap N_2N_3|$.
$N_2N_3$ must be an $m_{p}$-secant (again by Proposition \ref{extendable} $(1)$), so by Lemma \ref{1modpthroughP}, $m$ is also 1 mod $p$; which contradicts our assumption.
\end{proof}

The theorem above yields a stronger characterization result on  generalized KM-arcs  of type $(0,m,t)$.

\begin{theorem}
	\label{main}
A generalized KM-arc $\cS$ of type $(0,m,t)$ in $\PG(2,q)$, $q=p^n$, $p$ prime, is either trivial, i.e. it is as in Examples \ref{trivialwith0} and \ref{trivialwithout0}, or $m \equiv t \equiv 0 \pmod p$.  
\end{theorem}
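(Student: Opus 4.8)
The plan is to reduce Theorem \ref{main} to the mod $p$ machinery already developed, and then eliminate the cases where $m\equiv t \equiv 0 \pmod p$ fails by invoking concurrency of the $t$-secants together with the combinatorial constraints of Lemma \ref{combinatorial}. First I would dispose of the degenerate cases $t=m$ and the boundary values $t\in\{q,q+1\}$, $m\in\{1,q,q+1\}$ directly via Proposition \ref{easy} and Theorem \ref{degencomb}, since each of these lands us inside Examples \ref{trivialwith0} or \ref{trivialwithout0}. So we may assume $t\neq m$, $1<t<q$ and $1<m<q$.

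The crux is the dichotomy on residues modulo $p$. Suppose, for contradiction, that we are not in the case $m\equiv t\equiv 0\pmod p$; then $t\not\equiv m\pmod p$ (if $t\equiv m$ but the common residue is nonzero, one uses that for a genuine generalized KM-arc $t\neq m$ as integers, and the mod $p$ analysis below still applies with the renitent-line argument). A generalized KM-arc of type $(0,m,t)$ is in particular a mod $p$ generalized KM-arc of type $(0,m',t')_p$ with $m'\equiv m$, $t'\equiv t$. I would like to apply Theorem \ref{mainm0} to conclude that the $t_p$-secants are concurrent, but that theorem requires either no $0$-secants or $m\equiv 0\pmod p$. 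If $\cS$ has no $0$-secants, Theorem \ref{degencomb} already forces a trivial example, so the interesting case has $0$-secants, and then I need $m\equiv 0\pmod p$ to run Theorem \ref{mainm0}. Thus I would split: if $m\not\equiv 0\pmod p$, I argue directly; if $m\equiv 0$ but $t\not\equiv 0$, I apply Theorem \ref{mainm0} to get concurrency of the $t_p$-secants.

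Assume first $m\not\equiv 0\pmod p$ (and $t\not\equiv m\pmod p$, which holds as soon as $t\not\equiv 0$ given $m\not\equiv0$, or is handled by $t\equiv m$ impossible since $t\neq m$ integers with small residues). Here I invoke Proposition \ref{prop2}: since $1<t<q$ and $t\not\equiv m\pmod p$, we get $m\mid q$, hence $p\mid m$, a contradiction. This kills the whole sub-case $m\not\equiv 0 \pmod p$ outright, which is the clean part of the argument. The remaining possibility is $m\equiv 0\pmod p$ but $t\not\equiv 0\pmod p$. Now Theorem \ref{mainm0} applies (as $m\equiv 0$), so the $t$-secants are concurrent at some point $P$. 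Concurrency plus the partition property from Lemma \ref{combinatorial}$(4)$ forces $(q+1)t=|\cS|=q(m-1)+t$, whence $qt=q(m-1)$, i.e. $t=m-1$; adjoining $P$ yields a maximal arc, landing in Example \ref{trivialwith0}$(4)$ — but a maximal arc in $\PG(2,q)$ with $1<t=m<q$ requires $q$ even, so $p=2$ and then $t=m-1$ is odd while $m\equiv0\pmod 2$ forces $t\equiv 1$, consistent, and we are again in a trivial example.

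The step I expect to be the main obstacle is making the residue bookkeeping airtight when one of $t,m$ is congruent to the other modulo $p$ while still being distinct as integers — for instance $t\not\equiv m$ as integers but $t\equiv m\pmod p$. In that borderline case neither Proposition \ref{prop2} nor the $m\equiv 0$ branch directly applies, so I would handle it separately: if $t\equiv m\pmod p$ with the common residue nonzero, then reading $\cS$ as a mod $p$ object of type $(0,c,c)_p$ via Proposition \ref{prop0a} forces $\cS$ to be collinear or a unital, both trivial; if the common residue is $0$ we are already in the asserted conclusion $m\equiv t\equiv 0\pmod p$. Carefully enumerating these residue patterns and checking each lands either in a trivial example or in the $m\equiv t\equiv 0$ case is the delicate part; the geometric inputs (Theorems \ref{mainm0}, \ref{degencomb}, Propositions \ref{prop2}, \ref{prop0a}, \ref{easy}) are all in hand, so the work is organizing the case split rather than proving anything genuinely new.
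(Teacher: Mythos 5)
Your overall skeleton is the same as the paper's (reduce to $t\not\equiv m \pmod p$ via Proposition \ref{prop0a}, use Proposition \ref{prop2} to force $m\mid q$, then Theorem \ref{mainm0} for concurrency and the count $|\cS|=(q+1)t$ to land on a maximal arc), but there is a genuine gap: you never handle $t=1$. You slip ``$1<t<q$'' into your standing assumptions, yet none of the tools you cite for the degenerate cases (Proposition \ref{easy}, Theorem \ref{degencomb}) excludes $t=1$, and Proposition \ref{prop2} explicitly requires $1<t<q$. Consequently your claim that the sub-case $m\not\equiv 0 \pmod p$ is ``killed outright'' as a contradiction is false: ovals in odd characteristic have $t=1$, $m=2\not\equiv 0\pmod p$, $t\not\equiv m \pmod p$, and they exist --- they are supposed to come out as a \emph{trivial example}, not be refuted. (Unitals, the other $t=1$ example, happen to be caught by your Proposition \ref{prop0a} branch since there $t\equiv m\equiv 1 \pmod p$, but ovals are not, and neither is a hypothetical regular semioval with $m\not\equiv 0,1 \pmod p$.) The paper closes exactly this case by invoking G\'acs's classification of regular semiovals, Result \ref{regsemi}, which states that generalized KM-arcs of type $(0,m,1)$ in $\PG(2,q)$ are precisely ovals and unitals; this is a substantial external input that cannot be recovered from Propositions \ref{easy}, \ref{prop0a}, \ref{prop2} or Theorem \ref{mainm0}, and your proposal neither cites it nor replaces it.

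A secondary, repairable gap sits in the branch $m\equiv 0$, $t\not\equiv 0 \pmod p$: concurrency of the $t$-secants plus the partition property of Lemma \ref{combinatorial}(4) only gives \emph{at most} $q+1$ $t$-secants, i.e. $|\cS|=kt$ with $k\leq q+1$, not $k=q+1$. The paper supplies the missing lower bound via Lemma \ref{1modpthroughP}: since $m\equiv 0 \pmod p$, every point off $\cS$ lies on $1 \pmod p$ $t_p$-secants, so the $t_p$-secants form a blocking set of the dual plane and hence number at least $q+1$; only then does $(q+1)t=q(m-1)+t$ and $m=t+1$ follow. (Alternatively, if $k\leq q$ one can pick a point $Q$ off $\cS$ and off all $t$-secants, deduce $m\mid q-t$ from Lemma \ref{combinatorial}(3), and contradict $p\mid m$, $p\nmid t$, $p \mid q$.) With Result \ref{regsemi} added for $t=1$ and this count justified, your case analysis matches the paper's proof essentially line for line.
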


\begin{proof}
Assume $p \notdivides m$ or $p\notdivides t$. Then by Proposition \ref{prop0a}, we may assume that $t\not\equiv m \pmod p$ and by Proposition \ref{prop2} $t=1$ or $t\geq q$, or $m\mid q$.  
In the first case, as we mentioned before, G\'acs proved that the only examples are the ovals and unitals, cf. Result \ref{regsemi}. If $t=q$, then take a $t$-secant $\ell$ of $\cS$ and let $P$ be the unique point of $\ell \setminus \cS$. Since each point of $\cS$ is incident with a unique $t$-secant, all $t$-secants pass through $P$. 
If $t=q+1$ then there is a unique $t$-secant and hence $\cS$ is a line. 
If $m=1$, then $\cS$ is a $t$-subset of a line.

If $m>1$ and $m\mid q$, then from Theorem \ref{mainm0} either $p\mid t$ or the $t_{p}$-secants are concurrent. By Lemma \ref{1modpthroughP} the $t_{p}$-secants form a dual blocking set and so when $p\notdivides t$, there are exactly $q+1$ of them. In this latter case, $|\cS|=(q+1)t=q(m-1)+t$. So $m=t+1$, hence by adding the common point of the $t$-secants to $\cS$ we obtain a maximal arc.
\end{proof}

\section{More examples}
\subsection{Cone construction}
The construction method described is 
\cite{GW} can be used to construct mod $p$ generalized KM-arcs in $\PG(2,q^h)$ from mod $p$ generalized KM-arcs in $\PG(2,q)$. Start from a generalized KM-arc of type $(0, m, t)$ in $\PG(2,q)$, which admits the property that the $t$-secants go through the point $N$, or start from a maximal arc and a point $N$ not in the arc. 
In both cases if $N$ plays the role of $Q$ in \cite[Construction 3.3]{GW} then we get a generalized KM-arc of type $(0, m, tq^{h-1})$ in $\PG(2,q^h)$.
(For more details see \cite[Construction 3.3]{GW} and the proceeding paragraph.)
Similarly, starting from a mod $p$ generalized KM-arc of type $(0, m, t)_p$ in $\PG(2,q)$, which admits the property that the proper $t_{p}$-secants are concurrent, 
or start from a $m$ mod intersecting point set we may obtain a mod $p$ generalized KM-arc of type $(0, m, 0)_p$ in $\PG(2,q^h)$.

In both cases, when $t\neq m$, the construction yields examples with concurrent $t$-secants (in case of generalized KM-arcs) and concurrent proper $t_{p}$-secants (in case of mod $p$ generalized KM-arcs).

\subsection{Examples from the real projective plane}
\label{Diracsection}

In this section we consider generalized and mod $p$ generalized KM-arcs of $\PG(2,\mathbb{R})$ defined analogously as in finite projective planes. 
It is easy to see that any finite subset of a line is a generalized KM-arc. We will need the following two results.

\begin{result}[Sylvester--Gallai theorem]
Given a finite number of points in the Euclidean plane, either all the points lie on a single line, or there is at least one line which contains exactly two of the points.
\end{result}

\begin{result}[Melchior's inequality \cite{Melchior}]
Denote by $\tau_k$ the number of $k$-secants of a given point set $\cP$ of size at least $3$ in the Euclidean plane. 
If the points of $\cP$ are not collinear then $\tau_2 \geq 3+\sum_{k\geq 4}(k-3)\tau_k$.
\end{result}

\begin{proposition}
	\label{embedd}
	Let $\cP$ be a finite mod $p$ generalized KM-arc of type $(0,m,t)_p$ in $\PG(2,\mathbb{R})$ not contained in a line. Then $p=2$, $t=0$ and $m=1$.
\end{proposition}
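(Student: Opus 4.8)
The plan is to first pin down $m$ and $t$ by a local counting argument together with Sylvester--Gallai, and then to rule out odd $p$ by playing the pair-counting identity against Melchior's inequality.

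First I would determine $m$. Fix a point $R\in\cP$. Since $\cP$ is finite, all but finitely many of the (infinitely many) lines through $R$ meet $\cP$ only in $R$, hence are $1_p$-secants. As at most one line through $R$ is the $t_p$-secant, infinitely many of these $1_p$-secants must be $m_p$-secants, forcing $m\equiv 1\pmod p$ and hence $m=1$. Next, since $\cP$ is not contained in a line, the Sylvester--Gallai theorem gives a $2$-secant $\ell$. Such a line meets $\cP$ in $2$ points, and $2\not\equiv 1\pmod p$, so $\ell$ cannot be an $m_p$-secant; being incident with points of $\cP$ it must be a $t_p$-secant, whence $t\equiv 2\pmod p$. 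In particular $t\neq m$, every point of $\cP$ lies on exactly one $t_p$-secant, and every $2$-secant is a $t_p$-secant; consequently each point of $\cP$ lies on \emph{at most one} $2$-secant. If $p=2$ then $t\equiv 2\equiv 0$, giving $t=0$, $m=1$ as claimed, so it remains to derive a contradiction when $p$ is odd.

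Assume $p\ge 3$ and write $n=|\cP|\ge 3$. Because $m=1$ and $t\equiv 2$, every line meeting $\cP$ in at least two points has either exactly $2$ points or at least $p+1$ points, since there is no admissible intersection size $k$ with $3\le k\le p$. Let $\tau_k$ denote the number of exact $k$-secants. The matching property gives $\tau_2\le n/2$, while counting point-pairs yields $\binom{n}{2}=\tau_2+\sum_{k\ge p+1}\binom{k}{2}\tau_k$; hence $\sum_{k\ge p+1}\binom{k}{2}\tau_k\ge \binom{n}{2}-n/2=\tfrac12 n(n-2)$. On the other hand, Melchior's inequality reads $\tau_2\ge 3+\sum_{k\ge p+1}(k-3)\tau_k$, so that $\sum_{k\ge p+1}(k-3)\tau_k\le \tau_2-3\le n/2-3$.

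The one danger, and the step I expect to be the main obstacle, is a single enormous secant: such a line contributes a huge number of collinear pairs to the first sum while contributing comparatively little to Melchior's sum, so a crude term-by-term comparison fails. This is exactly where Melchior's inequality itself saves the argument: from $\sum_{k\ge p+1}(k-3)\tau_k\le n/2-3$ and the nonnegativity of the summands, \emph{every} $k$-secant with $k\ge p+1$ satisfies $k-3\le n/2-3$, i.e.\ $k\le n/2$. For such a secant one has $4\le k\le n/2$, hence $n\ge 2k$, and an elementary inequality (equivalent to $k(3k-11)\ge 0$) gives $\binom{k}{2}\le n(k-3)$. Summing over all big secants, $\tfrac12 n(n-2)\le \sum_{k\ge p+1}\binom{k}{2}\tau_k\le n\sum_{k\ge p+1}(k-3)\tau_k\le n\bigl(n/2-3\bigr)=\tfrac12 n(n-6)$, which forces $n-2\le n-6$, an absurdity. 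Hence $p$ cannot be odd, and we conclude $p=2$, $t=0$, $m=1$.
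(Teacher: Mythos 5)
Your proof is correct, and its first half is exactly the paper's opening: the infinitude of tangents through any point of $\cP$ forces $m=1$, Sylvester--Gallai produces a $2$-secant which must then be a $t_{p}$-secant, so $t\equiv 2 \pmod p$, and the fact that each point lies on a unique $t_{p}$-secant makes the $2$-secants a partial matching, giving $\tau_2\le n/2$. Where you diverge is the endgame for odd $p$. The paper first notes (by an easy induction from Sylvester--Gallai) that $n\ge 3$ non-collinear points span at least $n$ lines, i.e. $\sum_{k\ge 2}\tau_k\ge n$; since $p>2$ forbids $3$-secants, the \emph{weakened} Melchior inequality $\tau_2\ge 3+\sum_{k\ge 4}\tau_k$ then gives $\tau_2\ge n/2+3/2$ in one line, contradicting $\tau_2\le n/2$. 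You instead double-count point pairs, $\binom{n}{2}=\tau_2+\sum_{k\ge p+1}\binom{k}{2}\tau_k$, and keep the full weights $(k-3)$ in Melchior, which obliges you to compare $\binom{k}{2}$ against $n(k-3)$ and to first bootstrap the bound $k\le n/2$ on long secants out of nonnegativity of Melchior's summands. Both routes are sound; the paper's spanned-lines lemma buys a much shorter contradiction, while your pair-counting version is quantitatively heavier but avoids the induction lemma altogether. Two small remarks: your bootstrap $k\le n/2$ is actually unnecessary, since $f(k)=n(k-3)-\binom{k}{2}$ is concave in $k$ with $f(4)=n-6\ge 0$ and $f(n)=n(n-5)/2\ge 0$ (and $n\ge 6$ is automatic from $3\le\tau_2\le n/2$), so $\binom{k}{2}\le n(k-3)$ holds for all secant sizes $4\le k\le n$, defusing the ``enormous secant'' you worried about; and your reduction of sizes $3\le k\le p$ is more than is needed --- like the paper, your argument really only uses $\tau_3=0$, which holds for every odd $p$.
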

\begin{proof}
	Denote by $\tau_k$ the number of $k$-secants of $\cP$ and put $n=|\cP|$. 
	Clearly, each point of $\cP$ is incident with more than one tangent and hence $m=1$. By the Sylvester--Gallai theorem $\cP$ will have $2$-secants, and hence $t=2$. Thus the number of proper $t_{p}$-secants of $\cP$ is at most $n/2$ and this yields also $\tau_2 \leq n/2$. 
	
	Next we show $p=2$ (and hence $t=0$). Again from the Sylvester-Gallai theorem, it can be easily shown by induction that $n\geq 3$ points of $\PG(2,\mathbb{R})$, not all of them collinear, span at least $n$ lines, i.e. $\sum_{k \geq 2} \tau_k \geq n$. 
	If $p > 2$, then $\cP$ cannot have $3$-secants, thus by 
	Melchior's inequality
	\[\tau_2 \geq 3+\sum_{k \geq 4}\tau_k=3+\sum_{k \geq 2}\tau_k - \tau_2\geq 3+n-\tau_2\]
	and hence $\tau_2 \geq n/2+3/2$, a contradiction.
\end{proof}

The following corollary can be deduced easily from above.
\begin{corollary}
The finite generalized KM-arcs of $\PG(2,\mathbb{R})$ are the finite subsets of  lines.
\end{corollary}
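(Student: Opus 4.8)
The plan is to derive this corollary as a direct specialization of Proposition~\ref{embedd} to the case of ordinary (rather than mod~$p$) generalized KM-arcs. The key observation is that a finite generalized KM-arc of type $(0,m,t)$ in $\PG(2,\mathbb{R})$ is, in particular, a mod~$p$ generalized KM-arc of type $(0,m',t')_p$ for \emph{every} prime~$p$, where $m'\equiv m$ and $t'\equiv t$ modulo~$p$. So the strategy is to show that if such an arc were not contained in a line, then Proposition~\ref{embedd} would force an impossible constraint on the true (non-reduced) intersection numbers $m$ and $t$.

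\medskip

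First I would argue by contradiction: suppose $\cP$ is a finite generalized KM-arc that is \emph{not} contained in a line. Then $\cP$ is non-collinear, so I may apply Proposition~\ref{embedd}. Reading that proposition for a fixed prime $p$, its hypotheses are met (the arc is finite and not contained in a line), so its conclusion gives $p=2$, $t\equiv 0$ and $m\equiv 1 \pmod 2$; moreover the proof shows directly that the genuine intersection numbers satisfy $m=1$ and $t=2$. The crux is that $m=1$ already means every secant through a point of $\cP$ other than its unique renitent line is a tangent, so each point of $\cP$ lies on at least two tangents, while $t=2$ means the renitent lines are genuine $2$-secants. But $m=1$ forces $\cP$ itself to be \emph{collinear}: indeed, if $P,R\in\cP$ are distinct then the line $PR$ meets $\cP$ in $m=1$ or $t=2$ points, and since it already contains the two points $P$ and $R$ it is a $2$-secant; taking a third point $S\in\cP\setminus PR$ (which exists as $\cP$ is non-collinear) the lines $SP$ and $SR$ would each be $1$-secants containing two points of $\cP$, which is absurd. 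This contradiction shows $\cP$ must be collinear.

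\medskip

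Conversely, every finite subset of a line is trivially a generalized KM-arc (each line meets it in either all of its points, or in the single intersection point with the host line, giving only the allowed intersection numbers), as already noted at the start of Section~\ref{Diracsection}. Combining the two directions yields exactly the statement: the finite generalized KM-arcs of $\PG(2,\mathbb{R})$ are precisely the finite subsets of lines.

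\medskip

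\textbf{Expected main obstacle.} The only subtle point is the interplay between the reduced numbers $m',t'$ appearing in the mod~$p$ formulation and the actual intersection numbers $m,t$ of the ordinary arc. Proposition~\ref{embedd} is stated for mod~$p$ arcs, so I must be careful that its internal argument—built on the Sylvester--Gallai theorem and Melchior's inequality—yields constraints on the \emph{true} values $m=1$ and $t=2$, not merely on their residues. This is in fact exactly what the proof of Proposition~\ref{embedd} establishes (it concludes $m=1$ from the tangent count and $t=2$ from Sylvester--Gallai before reducing mod~$2$), so the specialization is immediate once one tracks that the actual numbers, and not just residues, are pinned down. No genuinely new calculation is required.
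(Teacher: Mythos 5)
Your proposal is correct and takes the route the paper intends---the paper gives no details beyond ``can be deduced easily from above,'' i.e.\ from Proposition~\ref{embedd}---but your execution differs in a way worth flagging. You re-enter the \emph{proof} of Proposition~\ref{embedd} to pin down the genuine intersection numbers $m=1$ and $t=2$, and then finish with an incidence argument; the \emph{statement} alone already suffices, since a finite generalized KM-arc of type $(0,m,t)$ is a mod~$p$ generalized KM-arc for \emph{every} prime, so applying Proposition~\ref{embedd} with any odd prime, say $p=3$, returns the conclusion $p=2$, an outright contradiction for a non-collinear set, with no tracking of residues versus true values required. Your longer finish is nevertheless sound, with one point of precision: the claim that $SP$ and $SR$ ``would each be $1$-secants'' is justified only because $PR$, being a $2$-secant through $P$ and through $R$, is the \emph{unique} $t$-secant (renitent line) at each of these two points, so every other line through $P$ or through $R$ is an $m$-secant, i.e.\ a tangent containing two points of $\cP$, which is absurd; you should make this dependence explicit, because without the convention that each point of a real-plane generalized KM-arc lies on exactly one $t$-secant (mirroring Definition~\ref{defmodpKMarcs}) the corollary would actually be false---any arc of type $(0,1,2)$, e.g.\ finitely many points on a circle, meets every line in $0$, $1$ or $2$ points. (Two minor remarks: $t=2$ is not in fact needed in your final step, since $PR$ having at least two points and $m=1$ already force it to be the $t$-secant at $P$; and the degenerate case $t=m$, allowed by the definitions, is dispatched by the same tangent count giving $m=1$.) The converse direction, that finite subsets of lines are generalized KM-arcs, is handled correctly.
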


Suppose that there exists an injective map $\varphi$ from the points of a mod $2$ generalized KM-arc $\cP$ of type $(0,1,0)_2$ in $\PG(2,\mathbb{R})$ to $\PG(2,q)$, $q$ even, such that any triplet of points $Q,R,S \in \cP$ is collinear if and only if $\varphi(Q), \varphi(R), \varphi(S)$ are collinear.
The $2$-secants of a real point set $\cP$ are usually called ordinary lines. The Dirac-Motzkin conjecture, proved by Green and Tao \cite{GT}, is the following: If $n$ is large enough, then any $n$-set of $\PG(2,\mathbb{R})$, not all of them collinear, spans at least $n/2$ ordinary lines. On the other hand, if the embedded point set $\varphi(\cP)$ is a mod $2$ generalized KM-arc, then the number of even secants of $\cP$ is at most $n/2$. Hence it is exactly $n/2$ and thus $n$ is even. Up to projectivities, there is a unique known example, due to B\"or\"oczky, of $n$-sets determining exactly $n/2$ ordinary lines: a regular $m$-gon in $\AG(2,\mathbb{R})$ together with the $m$ directions determined by them, where $m=n/2$. For embeddings of regular $m$-gons, preserving parallelism of its secants, see the survey \cite{affreg} on affinely regular $m$-gons. Note that these objects all give rise to sharply focused arcs defined below.

\begin{definition}
	A $k$-arc of $\AG(2,q)$ is called sharply focused if it determines $k$ directions and it is called hyperfocused if it determines $k-1$ directions. 
\end{definition}

\begin{example}
\label{sarc}
In $\AG(2,q)$, $q$ even, consider a sharply focused arc $\cS$ of size $k$, $k$ odd. If $\cD$ denotes the set of $k$ directions determined by $\cS$, then $\cS \cup \cD$ is a mod $2$ generalized KM-arc of type $(0,1,0)_2$.
\end{example}

In Example \ref{sarc} the number of tangents to $\cS$ meeting $\cD$ is $k$.
Also, since $k$ is odd, each point of $\cD$ is incident with a unique tangent to $\cS$. Then Lemma \ref{renitent} applied to the affine point set $\cS$ gives that these $k$ tangents are concurrent, they meet in a point $R\notin \cS \cup \cD$. 
Note that $\cS \cup \{R\}$ is a hyperfocused arc determining the same set of directions as $\cS$. For $q$ even (and $k$ even or odd) the extendability of a sharply focused $k$-arc to a hyperfocused $(k+1)$-arc was proved by Wettl \cite{Wettl}.

\section*{Acknowledgement}
The authors are grateful to the anonymous referees for their valuable comments and suggestions which have certainly improved the quality of
the manuscript.

\begin{flushleft}
Bence Csajb\'ok \\
MTA--ELTE Geometric and Algebraic Combinatorics Research Group\\
ELTE E\"otv\"os Lor\'and University, Budapest, Hungary\\
Department of Geometry\\
1117 Budapest, P\'azm\'any P.\ stny.\ 1/C, Hungary\\
{{\em csajbokb@cs.elte.hu}}

\medskip

Zsuzsa Weiner\\
MTA--ELTE Geometric and Algebraic Combinatorics Research Group,\\
1117 Budapest, P\'azm\'any P.\ stny.\ 1/C, Hungary\\

{{\em zsuzsa.weiner@gmail.com}}\\

and\\
 
Prezi.com\\
 H-1065 Budapest, Nagymez\H o utca 54-56, Hungary\\
\end{flushleft}


\begin{thebibliography}{10}

\bibitem{AK}
A.~Aguglia and G.~Korchm\'aros,  Multiple blocking sets and multisets in Desarguesian planes, {\it Des.\ Codes Cryptogr.}\ {\bf 56} (2010) 177--181.

\bibitem{maxarc}
S.~Ball, A.~Blokhuis and F.~Mazzocca, 
Maximal arcs in Desarguesian planes of odd order do not exist, {\it Combinatorica} {\bf 17} (1997) 31--41. 

\bibitem{maxarc2}
S.~Ball and A.~Blokhuis, 
An easier proof of the maximal arcs conjecture, {\it Proc.\ Amer.\ Math.\ Soc.}\, {\bf 126} (1998) 3377--3380. 

\bibitem{BBGSzW}
S.~Ball, A.~Blokhuis, A.~G\'acs, P. Sziklai and Zs.~Weiner, 
On linear codes whose weights and length have a common divisor, {\it Adv.\ Math.}\, {\bf 211} (2007) 94--104.

\bibitem{minimultiple}
A.~Bishnoi, S.~Mattheus and J.~Schillewaert,
Minimal multiple blocking sets, {\it Electron.\ J.\ Combin.}\ {\bf 25}(4) (2018) P4.66

\bibitem{seminuclear}
A.~Blokhuis, Characterization of seminuclear sets in a finite projective plane,
{\it J.\ Geom.} {\bf 40} (1991), 15--19.

\bibitem{semi}
A.~Blokhuis and T.~Sz\H onyi, Note on the structure of semiovals, {\it Discrete Math.} {\bf 106/107} (1992), 61--65.

\bibitem{BT}
A.A.~Bruen and J.A.~Thas, Blocking sets, {\it Geom.\ Dedicata} {\bf 6} (1977), 193--203.

\bibitem{Handbook}
C.J.~Colbourn and J.H.~Dinitz, {\it Handbook of Combinatorial Designs}, 2nd ed. Boca Raton, FL: Chapman and Hall/CRC, 2007.

\bibitem{BCs}
B.~Csajb\'ok, On bisecants of R\'edei type blocking sets and applications,  {\it Combinatorica} {\bf 38} (2018), 143--166.

\bibitem{Belga1}
M.~De~Boeck and G.~Van~De~Voorde, A linear set view on KM-arcs, {\it J.\ Algebraic Combin.} {\bf 44} (2016), 131--164.

\bibitem{Belga2}
M.~De~Boeck and G.~Van~De~Voorde, Elation KM-Arcs, 
{\it Combinatorica} {\bf 39}(3), 501--544.

\bibitem{gacssemi}
A.~G\'{a}cs, On regular semiovals in $\mathrm{PG}(2,q)$, {\it J.\ Algebraic Combin.} {\bf 23} (2006), 71--77.

\bibitem{GW}
A.~G\'{a}cs and Zs.~Weiner, On $(q+t,t)$ arcs of type $(0,2,t)$, {\it Des.\ Codes Cryptogr.}   {\bf 29}  (2003), 131--139.

\bibitem{GT}
B.~Green and T.~Tao, On sets defining few ordinary lines, {\it Discrete \& Computational
Geometry} {\bf 50:2} (2013), 409--468.

\bibitem{HSz}  J.W.P.~Hirschfeld and T.~Sz\H onyi, Sets in a finite plane with few intersection numbers and a distinguished point, {\it Discrete Math.}, {\bf 97}(1--3) (1991), 229--242.

\bibitem{KM}
G.~Korchm\'aros and F.~Mazzocca, On $(q+t)$-arcs of type $(0,2,t)$ in a Desarguesian plane of order $q$. {\it Math.\ Proc.\ Cambridge Philos.\ Soc.}\, {\bf 108} (3) (1990), 445--459.

\bibitem{affreg} G.~Korchm\'aros and T.~Sz\H{o}nyi, Affinely regular polygons in an affine plane,  {\it Contrib.\ Discret.\ Math.}\ {\bf 3}, 20--38 (2008).

\bibitem{Mason} J.R.M.~Mason, A class of $((p^n-p^m)(p^n-1),p^n-p^m)$-arcs in $\mathrm{PG}(2,p^n)$, {\it Geom. Dedicata} {\bf 15} (1984), 355--361.

\bibitem{Melchior} E.~Melchior, \"Uber vielseite der projektive ebene,  {\it Deutsche Mathematik} {\bf 5} (1941), 461--475.

\bibitem{OlgaSurvey} O.~Polverino, Linear sets in finite projective spaces, {\it Discrete Math.} {\bf 310}(22) (2010), 3096--3107.


\bibitem{PentillaRoyle}
T.~Pentilla and G.~Royle, Sets of type $(m,n)$ in affine and projective planes of order nine, {\it Des.\ Codes Cryptogr.} {\bf 6} (1995), 229--245.

\bibitem{Sziklaipoly}
P.~Sziklai, Polynomials in finite geometry, Manuscript available online at
\url{http://web.cs.elte.hu/~sziklai/polynom/poly08feb.pdf}

\bibitem{dirszonyi}
T.~Sz\H{o}nyi, On the number of directions determined by a set of points in an affine Galois plane,
{\it J.\ Combin.\ Theory Ser.\ A} {\bf 74} (1996), 141--146.

\bibitem{SzW}
T.~Sz\H{o}nyi and Zs.~Weiner, Stability of $k$ mod $p$ multisets and small weight codewords of the code generated by the lines of $\mathrm{PG}(2,q)$, {\it J.\ Combin.\ Theory Ser. A} \ {\bf 157} (2018), 321--333. 

\bibitem{PVDesarg}
P.~Vandendriessche, Codes of Desarguesian projective planes of even order, projective triads and $(q+t,t)$-arcs of type $(0,2,t)$, {\it Finite Fields Appl.} {\bf 17}(6) (2011), 521--531.

\bibitem{PV}
P.~Vandendriessche, On KM-arcs in non-Desarguesian projective planes, {\it Des.\ Codes Cryptogr.} {\bf 87} (2019), 2129--2137.

\bibitem{Wettl} F.~Wettl, On the nuclei of a pointset of a finite projective plane, {\it J.\ Geom.} {\bf 30} (1987), 157--163.


\end{thebibliography}
\end{document}